\newtheorem{thm}{Theorem}
\newtheorem{lem}[thm]{Lemma}
\newtheorem{crl}[thm]{Corollary}
\newtheorem{rmk}{Remark}
\numberwithin{equation}{section}
\numberwithin{thm}{section}
\numberwithin{rmk}{section}
\numberwithin{nt}{section}
\numberwithin{Example}{section}
\newcommand{\f}[1]{{\boldsymbol{#1}}}
\DeclareMathOperator{\Ker}{{{Ker}}}
\DeclareMathOperator{\im}{{{Im}}}
\DeclareMathOperator{\Gis}{{\mathtt{Gen}}}
\newcommand{\alp}{\alpha}
\newcommand{\gam}{\gamma}
\newcommand{\Lam}{\Lambda}
\newcommand{\ome}{\omega}
\newcommand{\Ome}{\Omega}
\newcommand{\ten}{\otimes}
\newcommand{\wed}{\wedge}
\newcommand{\Fla}{^{\flat}{}}
\newcommand{\Sha}{^{\sharp}{}}
\newcommand{\db}[1]{\,{{#1}\!{#1}}\,}
\newcommand{\M}[1]{{\mathcal{#1}}}
\newcommand{\com}{{}\circ{}}
\newcommand{\tecka}{\text{\Large{.}}}
\begin{document}

\title[On Lie algebras of generators]{On Lie algebras of generators of infinitesimal symmetries of almost-cosymplectic-contact structures }

\author[J. Jany\v ska]{Josef Jany\v ska}

\thanks{Supported
by the  grant GA \v CR 14--02476S}

\address{\ 
\newline
Department of Mathematics and Statistics, Masaryk University
\newline
Kotl\'a\v rsk\'a 2, 611 37 Brno, Czech Republic
\newline
e-mail: janyska@math.muni.cz}
  
\begin{abstract}
We study Lie algebras of generators of infinitesimal symmetries of almost-cosymplectic-contact structures of  odd dimensional manifolds.
The almost-cosymplectic-contact structure admits on the sheaf of pairs  of 1-forms and functions the structure of a Lie algebra. 
We describe Lie subalgebras in this Lie algebra given by pairs generating infinitesimal symmetries of basic tensor fields given by the  almost-cosymplectic-contact structure. 
\end{abstract}

\maketitle

\bigskip

\noindent{{\it Keywords}:
Almost-cosymplectic-contact structure; almost-coPoisson-Jacobi structure; infinitesimal symmetry; Lie algebra.
\\
{\ }
\\
{\it Mathematics Subject Classification 2010}:
53C15.
}

\section*{Introduction}
\setcounter{equation}{0}

The (7-dimensional) phase space of the (4-dimensional) classical spa\-ce\-time can be defined as the space of 1-jets of motions, \cite{JanMod08}. A Lorentzian metric and an electromagnetic field then define on the phase space the geometrical structure given by a 1-form $ \omega $ and a 2-form $ \Omega $ such that $ \omega \wed \Omega^{3} \not\equiv 0 $ and $ d\Omega = 0 $.
In \cite{JanMod09} such structure was generalized for any odd-dimensional manifold $ \f M $ under the name almost-cosymplectic-contact structure.
The almost-cosymplectic-contact structure on $ \f M $ admits a Lie brac\-ket $ \db[ , \db] $ of pairs $ (\alpha,h)  $ of 1-forms and functions which define a Lie algebra structure on the sheaf
$ \Omega^{1}(\f M) \times C^{\infty}(\f M) $.

In \cite{Jan15, JanVit12} we have studied infinitesimal symmetries of the   almost-cosymplectic-contact structure of the classical phase space. In this paper we shall study infinitesimal symmetries of basic fields generating almost-cosymplectic-contact structure on any odd dimensional manifold. We shall prove that such infinitesimal symmetries are generated by pairs $ (\alpha,h)  $ satisfying certain properties and the restriction of $ \db[ , \db] $
to the subsheaf of generators of infinitesimal symmetries defines Lie subalgebras in $ (\Omega^{1}(\f M) \times C^{\infty}(\f M); \db[ , \db]) $.

\smallskip
In the paper all manifolds and mappings are assumed to be smooth.

\section{Preliminaries}
\setcounter{equation}{0}

We recall some basic notions used in the paper.

\smallskip
{\bf Schouten-Nijenhuis bracket.}
Let us denote by $\mathcal{V}^{p}(\f M) $ the sheaf of skew symmetric
contravariant tensor fields of type $(p,0)$. As the {\em Schouten-Nijenhuis
bracket} (see, for instance, \cite{Vai94}) we assume the 1st order bilinear natural differential operator (see \cite{KolMicSlo93})
$$
[,] : \mathcal{V}^{p}(\f M) \times  \mathcal{V}^{q}(\f M) \to
\mathcal{V}^{p+q-1}(\f M)
$$
given by
\begin{equation}\label{Eq: 1.1a}
i_{[P,Q]} \beta = (-1)^{q(p+1)} i_P d i_Q \beta
+ (-1)^{p} i_Q d i_P \beta
-  i_{P \wedge Q} d\beta 
\end{equation}
for any $P\in \mathcal{V}^{p}(\f M)$, $Q\in \mathcal{V}^{q}(\f M)$ and $(p+q-1)$-form $\beta$. Especially, for a vector field $X$, we have
$
[X,P] = L_X P\,.
$
The Schouten-Nijenhuis bracket is a generalization of the Lie bracket
of vector fields.

We have the following identities
\begin{align}
[P,Q]
& =
(-1)^{pq} \, [Q,P]\,,
\\
[P,Q\wedge R]
& =
[P,Q] \wedge R 
+ (-1)^{pq + q} \, Q\wedge [P,R]\,,
\end{align}
where $R\in \mathcal{V}^{r}(\f M)$. Further we have the (graded) Jacobi identity
\begin{align}
(-1)^{p(r-1)} \, \big[P,[Q,R]\big] 
+ (-1)^{q(p-1)} \, \big[Q,[R,P]\big]
&
\\ 
+ (-1)^{r(q-1)} \, \big[R,[P,Q]\big]
& = \nonumber
0\,. 
\end{align}

\smallskip
{\bf Structures of odd dimensional manifolds.}
Let $\f M$ be a $(2n+1)$-dimensional manifold.

A
\emph{pre cosymplectic (regular) structure (pair)} 
on $\f M$ is given by a 1-form $\ome$ and a 2-form $\Ome$
such that
$
\ome\wed \Ome^n \not\equiv 0\,.
$
A \emph{contravariant (regular) structure (pair)} $(E,\Lam)$ is given by
a vector field $E$ and a skew symmetric 2-vector field $\Lam$ such that
$
E\wed \Lam^n \not\equiv 0\,.
$
We denote by
$
\Ome\Fla:T\f M\to T^*\f M\,
$
and
$
\Lam\Sha:T^*\f M\to T\f M\,
$
the corresponding "musical" morphisms.

By \cite{Lich78}
if $(\ome,\Ome)$ is a pre cosymplectic pair then
there exists a unique regular pair
$(E,\Lam)$ such that
\begin{equation}\label{Eq: 1.1}
(\Ome\Fla_{|\im(\Lam\Sha)})^{-1}
= \Lam\Sha_{|\im(\Ome\Fla)} \,,
\quad i_E\ome =1\,,
\quad i_E\Ome =0\,,
\quad i_\ome\Lam = 0\,.
\end{equation}
On the other hand for any regular pair $(E,\Lam)$
there exists a unique (regular) pair $(\ome,\Ome)$
satisfying the above identities.
The pairs $(\ome,\Ome)$ and
$(E,\Lam)$ satisfying the above identities are said to be mutually 
\emph{dual}.
The vector field $E$ is usually called
the 
\emph{Reeb vector field}
of the pair $(\ome,\Ome)$. In fact
geometrical structures given by dual pairs coincide. 

An 
\emph{almost-cosymplectic-contact (regular) structure (pair)}
\cite{JanMod09}
is
given by
a pair 
$(\ome,\Ome)$
such that
\begin{equation}\label{Eq: 1.2}
d\Ome = 0 \,,\qquad
\ome \wed \Ome^n \not\equiv 0 \,.
\end{equation}
The dual 
\emph{almost-coPoisson-Jacobi structure (pair)\/} 
is given by the
pair $(E,\Lam)$
such that
\begin{equation}\label{Eq: 1.3}
[E,\Lam] = - E\wed \Lam\Sha(L_E\ome)\,,
\qquad
[\Lam,\Lam] = 2 \, E\wed (\Lam\Sha\ten\Lam\Sha)(d\ome)\,.
\end{equation}
Here $[,]$ is the Schouten-Nijenhuis bracket \eqref{Eq: 1.1a}.

\begin{rmk}\label{Rm: 1.1}
{\rm
An almost-cosymplectic-contact pair generalizes standard cosymplectic and contact pairs.
Really, if $d\ome =0$ we obtain a cosymplectic pair (se, for instance, \cite{deLTuy96}).
The dual \emph{coPoisson pair} (see \cite{JanMod09}) is
given by the pair
$(E, \Lam)$
such that
$
[E, \Lam] = 0 \,,
$
$
[\Lam, \Lam] = 0 \,.
$
A 
{contact structure (pair)}
is given by a pair 
$(\ome,\Ome)$
such that
$
\Ome = d\ome \,,
$
$
\ome \wed \Ome^n \not\equiv 0 \,.
$
The dual
{\it Jacobi structure (pair)}
is given by the pair
$(E, \Lam)$
such that
$
[E, \Lam] = 0 \,,
$
$
[\Lam, \Lam] = - 2 E \wed \Lam \,
$ (see \cite{Kir76}).
}
\end{rmk}

\begin{rmk}\label{Rm: 1.2}
{\rm
Given an almost-cosymplectic-contact regular pair $ (\omega,\Omega) $ we can consider the second pair  $ (\omega,F=\Omega + d\omega) $ which is 
almost-cosymplectic-contact but generally  need not be regular.
}
\end{rmk}

\smallskip
{\bf Splitting of the tangent bundle.}
In what follows we assume an odd dimensional manifold $ \f M $ with a regular almost-cosymplectic-contact structure $ (\omega,\Omega)  $. We assume the dual (regular) almost-coPois\-son-Jacobi structure  $(E,\Lambda)$. 
 Then we have $ \Ker (\omega) = \im(\Lambda\Sha) $ and $ \Ker (E) = \im(\Omega\Fla) $ and we have the splitting
$$
T\f M =  \im(\Lambda\Sha)\oplus \langle E \rangle\,, \qquad
T^*\f M = \im(\Omega\Fla) \oplus \langle \omega \rangle\,,
$$ 
i.e. any vector field $X$ and any 1-form $\beta$ can be decomposed as
\begin{equation}\label{Eq: 1.4}
X = X_{(\alpha,h)} = \alpha\Sha + h\, E\,, \qquad
\beta = \beta_{(Y,f)} = Y\Fla + f\, \omega\,,
\end{equation}
where $h,f\in C^\infty(\f M)$, $\alpha$ be a 1-form and $Y$ be a vector field. 
In what follows we shall use notation $\alpha\Sha = \Lambda\Sha(\alpha)$ and $Y\Fla = \Omega\Fla(Y)$.
Moreover, $h = \omega(X_{(\alpha,h)})$ and $f = \beta_{(Y,f)}(E)$. Let us note that the splitting \eqref{Eq: 1.4} is not defined uniquely, really $ X_{(\alpha_1,h_1)} = X_{(\alpha_2,h_2)}$ if and only if $\alpha_1\Sha = \alpha_2\Sha$ and $h_1= h_2$, i.e. $\alpha_1\Sha - \alpha_2\Sha = 0$ that means that $\alp_1 - \alpha_2 \in \langle\omega\rangle$. Similarly
$\beta_{(Y_1,f_1)} = \beta_{(Y_2,f_2)}$ if and only if $Y_1 - Y_2 \in \langle E \rangle$ and $f_1 = f_2$.

The projections $p_2:T\f M \to \langle E \rangle$ and $p_1: T\f M \to \im(\Lambda\Sha) = \Ker(\omega)$ are given by $ X \mapsto \omega(X) \, E $ and $ X \mapsto X - \omega(X) \, E $\,. Equivalently, the projections $q_2:T^{*}\f M \to \langle \omega \rangle$ and $q_1: T^{*}\f M \to \im(\Omega\Fla) = \Ker(E)$ are given by $ \beta \mapsto \beta(E) \, \omega $ and $ \beta \mapsto \beta - \beta(E) \, \omega $\,.
Moreover, $\Lambda\Sha \com \Omega\Fla = p_1$ and $\Omega\Fla \com \Lambda\Sha = q_1$.

\section{Lie algebras of generators of infinitesimal symmetries}
\setcounter{equation}{0}
We shall study infinitesimal symmetries of basic tensor
fields generating the almost-cosymplectic-contact and the dual 
almost-coPoisson-Jacobi structures. 

\subsection{Lie algebra of pairs of 1-forms and functions}
The almost-cosymplectic-contact structure allows us to define a Lie algebra structure on the sheaf $\Omega^1(\f M)\times  C^\infty(\f M)$ of 1-forms and functions. 

\begin{lem}\label{Lm: 2.1}
Let us assume two vector fields $ X_{(\alpha_i,h_i)} = \alpha_i\Sha + h_i\, E $, $i=1,2$, on $\f M$. Then
\begin{align}\label{Eq: 2.1}
[X_{(\alpha_1  ,h_1)} & ,X_{(\alpha_2,h_2)}] 
 =
\big(
d\Lambda(\alpha_1,\alpha_2) 
- i_{\alpha_2\Sha} d\alpha_1 
+ i_{\alpha_1\Sha} d\alpha_2
\\
& \nonumber
- \alpha_1(E) \, (i_{\alpha_2\Sha}d\omega)
+ \alpha_2(E) \, (i_{\alpha_1\Sha}d\omega)
\\
& \nonumber
+ h_1 \,( L_E\alpha_2 -  \alpha_2(E) \, L_E \omega) - h_2 \,( L_E\alpha_1 -  \alpha_1(E) \, L_E \omega)
\big)\Sha 
\\
& \nonumber
+ \big(
\alpha_1\Sha{\tecka} h_2 -  \alpha_2\Sha{\tecka}h_1 - d\omega(\alpha_1\Sha, \alpha_2\Sha)
\\
& \nonumber
+ h_1 \,(E{\tecka}h_2 + \Lambda(L_E\omega, \alpha_2)) 
\\
& \nonumber
-  h_2 \, ( E{\tecka}h_1 +  \Lambda(L_E\omega, \alpha_1))
\big) \, E\,.
\end{align}
\end{lem}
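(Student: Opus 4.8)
The plan is to expand $[X_{(\alpha_1,h_1)},X_{(\alpha_2,h_2)}]$ by bilinearity and the Leibniz rule for the Lie bracket of vector fields into the four pieces
$$[\alpha_1\Sha,\alpha_2\Sha],\quad [\alpha_1\Sha,h_2\,E],\quad [h_1\,E,\alpha_2\Sha],\quad [h_1\,E,h_2\,E],$$
and to evaluate each with the structure equations \eqref{Eq: 1.3}. The two mixed pieces and the last one split off the directional-derivative terms $(\alpha_1\Sha{\tecka}h_2)\,E$, $-(\alpha_2\Sha{\tecka}h_1)\,E$ and $(h_1\,E{\tecka}h_2-h_2\,E{\tecka}h_1)\,E$, so the genuine geometric content is carried by the three vector-field brackets $[\alpha_1\Sha,\alpha_2\Sha]$ and $[\alpha_i\Sha,E]$.

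First I would treat $[\alpha_i\Sha,E]=-L_E(\alpha_i\Sha)$. Since $\alpha_i\Sha=i_{\alpha_i}\Lam$, the Leibniz rule for the Lie derivative of a contraction gives $L_E(\alpha_i\Sha)=i_{L_E\alpha_i}\Lam+i_{\alpha_i}(L_E\Lam)=(L_E\alpha_i)\Sha+(L_E\Lam)\Sha(\alpha_i)$, where $L_E\Lam=[E,\Lam]$. Substituting the first equation of \eqref{Eq: 1.3} and contracting the decomposable bivector $-E\wed(L_E\ome)\Sha$ with $\alpha_i$ yields $(L_E\Lam)\Sha(\alpha_i)=-\alpha_i(E)\,(L_E\ome)\Sha+\Lam(L_E\ome,\alpha_i)\,E$. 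Multiplying by $h_j$ and collecting then produces the $\im(\Lam\Sha)$-terms $h_1(L_E\alpha_2-\alpha_2(E)L_E\ome)\Sha-h_2(L_E\alpha_1-\alpha_1(E)L_E\ome)\Sha$ of the third line, and contributes the summands $h_1\,\Lam(L_E\ome,\alpha_2)-h_2\,\Lam(L_E\ome,\alpha_1)$ to the coefficient of $E$.

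The essential term is $[\alpha_1\Sha,\alpha_2\Sha]=[\Lam\Sha\alpha_1,\Lam\Sha\alpha_2]$. Here I would use the standard identity (derivable from the graded Leibniz and Jacobi identities of Section 1, cf. \cite{Vai94})
$$[\Lam\Sha\alpha_1,\Lam\Sha\alpha_2]=\Lam\Sha\big(i_{\alpha_1\Sha}d\alpha_2-i_{\alpha_2\Sha}d\alpha_1+d\Lam(\alpha_1,\alpha_2)\big)+\frac12\,i_{\alpha_2}i_{\alpha_1}[\Lam,\Lam],$$
writing $d\Lam(\alpha_1,\alpha_2)$ for $d\big(\Lam(\alpha_1,\alpha_2)\big)$. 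Applying $\Lam\Sha$ to the Koszul bracket reproduces the whole first line of \eqref{Eq: 2.1}. For the correction I would insert the second equation of \eqref{Eq: 1.3}, $[\Lam,\Lam]=2\,E\wed(\Lam\Sha\ten\Lam\Sha)(d\ome)$, and contract the resulting $3$-vector with $\alpha_1$ and $\alpha_2$; this splits into the second-line terms $-\alpha_1(E)(i_{\alpha_2\Sha}d\ome)\Sha+\alpha_2(E)(i_{\alpha_1\Sha}d\ome)\Sha$ together with a single multiple of $E$ that the $\ome$-pairing below pins down.

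It then remains to read off the decomposition along $T\f M=\im(\Lam\Sha)\oplus\langle E\rangle$ of \eqref{Eq: 1.4}. I would extract the $\im(\Lam\Sha)$-part with the projection $p_1=\Lam\Sha\com\Ome\Fla$ (every vector collected above being manifestly of the form $\gamma\Sha$), and, most safely, recover the coefficient of $E$ by pairing the whole bracket with $\ome$, using $\ome([U,V])=U{\tecka}\ome(V)-V{\tecka}\ome(U)-d\ome(U,V)$ together with $\ome(\alpha_i\Sha)=(i_\ome\Lam)(\alpha_i)=0$ and $\ome(E)=1$ from \eqref{Eq: 1.1}. This produces at once the full coefficient of $E$ in \eqref{Eq: 2.1}, in particular the term $-d\ome(\alpha_1\Sha,\alpha_2\Sha)$. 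The only real difficulty is bookkeeping: keeping the sign and slot conventions of the interior products, of the Koszul bracket and of the Schouten square $[\Lam,\Lam]$ mutually consistent so that the contraction lands with the signs displayed; once these are fixed, matching the two sides of \eqref{Eq: 2.1} is a term-by-term verification.
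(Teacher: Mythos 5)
Your proposal is correct, and its skeleton is the same as the paper's: expand $[X_{(\alpha_1,h_1)},X_{(\alpha_2,h_2)}]$ by bilinearity and the Leibniz rule into $[\alpha_1\Sha,\alpha_2\Sha]$, the two mixed brackets, and the derivative terms along $E$. The difference lies in how the two key partial brackets are handled. The paper simply quotes the formulas \eqref{Eq: 2.2}--\eqref{Eq: 2.3} for $[E,\alpha\Sha]$ and $[\alpha\Sha,\beta\Sha]$ from \cite{JanMod09} and substitutes them; you instead re-derive both from the defining equations \eqref{Eq: 1.3} of the dual almost-coPoisson-Jacobi structure: the mixed bracket via the Leibniz rule $L_E(i_{\alpha}\Lam)=i_{L_E\alpha}\Lam+i_{\alpha}[E,\Lam]$ followed by contraction of $-E\wed(L_E\ome)\Sha$, and $[\alpha_1\Sha,\alpha_2\Sha]$ via the Koszul-bracket identity with the correction $\tfrac12\,i_{\alpha_2}i_{\alpha_1}[\Lam,\Lam]$, into which the second equation of \eqref{Eq: 1.3} is inserted. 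Your computation of the mixed bracket reproduces \eqref{Eq: 2.2} exactly, and your $\ome$-pairing device (Cartan's formula together with $\ome(\alpha_i\Sha)=0$, $i_E\ome=1$, $L_E\ome=i_Ed\ome$) recovers the entire $E$-coefficient of \eqref{Eq: 2.1}, signs included, independently of any Schouten-sign conventions --- arguably a safer route to that component than substitution of the quoted formulas. What your approach buys is self-containedness: the lemma is reduced to \eqref{Eq: 1.3} plus standard identities of Poisson calculus rather than to an external citation. What it costs is exactly the convention bookkeeping you flag for the $\im(\Lam\Sha)$-part.

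One caveat about that bookkeeping: the signs of the terms $\alpha_1(E)\,(i_{\alpha_2\Sha}d\ome)$ and $\alpha_2(E)\,(i_{\alpha_1\Sha}d\ome)$ cannot be matched term by term against the paper consistently, because the paper is internally inconsistent there. Substituting \eqref{Eq: 2.3} into the Leibniz expansion yields $+\alpha_1(E)\,(i_{\alpha_2\Sha}d\ome)-\alpha_2(E)\,(i_{\alpha_1\Sha}d\ome)$, which is what appears in the bracket \eqref{Eq: 2.4}, whereas the statement \eqref{Eq: 2.1} displays these two terms with the opposite signs; so one of \eqref{Eq: 2.1}, \eqref{Eq: 2.3} carries a sign typo. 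Once your contraction conventions are fixed, your derivation will land on one of the two versions, and landing on the \eqref{Eq: 2.3}/\eqref{Eq: 2.4} version rather than on \eqref{Eq: 2.1} as printed should not be counted as an error in your argument.
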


\begin{proof}
It follows from (see \cite{JanMod09})
\begin{align}\label{Eq: 2.2}
	[E,\alpha\Sha] 
& =
	\big(L_E \alpha - \alpha(E)\, (L_E\omega)\big)\Sha + 
\Lambda(L_E\omega,\alpha) \, E \,,
\\
	[\alpha\Sha, \beta\Sha] \label{Eq: 2.3}
&=
	\big(d\Lambda(\alpha,\beta) 
- i_{\beta\Sha} d\alpha 
+ \alpha(E) \, (i_{\beta\Sha}d\omega)
\\
& \quad\nonumber
+ i_{\alpha\Sha} d\beta
- \beta(E) \, (i_{\alpha\Sha}d\omega)
\big) \Sha 
- d\omega(\alpha\Sha, \beta\Sha) \, E \,.
\end{align}
Then
\begin{align*}
	[X_{(\alpha_1  ,h_1)}  ,X_{(\alpha_2,h_2)}] 
& =
[\alpha_1\Sha, \alpha_2\Sha] + h_2 [\alp_1\Sha,E]  + h_1 [E,\alpha_2\Sha] 
\\ 
& \quad 
+ \big(\alpha_1\Sha{\tecka}h_2 - \alpha_2\Sha{\tecka}h_1 + h_1 E{\tecka}h_2 - h_2 E{\tecka}h_1
\big)\, E
\end{align*}
and from \eqref{Eq: 2.2} and \eqref{Eq: 2.3} we get Lemma \ref{Lm: 2.1}.
\end{proof} 

As a consequence of Lemma \ref{Lm: 2.1} we get the Lie bracket of pairs $(\alpha_i,h_i)\in \Omega^1(\f M)\times  C^\infty(\f M)$ given by
\begin{align}\label{Eq: 2.4}
\db[(\alpha_1  ,h_1) & ;(\alpha_2,h_2)\db] 
 =
\big(
d\Lambda(\alpha_1,\alpha_2) 
- i_{\alpha_2\Sha} d\alpha_1 
+ i_{\alpha_1\Sha} d\alpha_2
\\
& \nonumber
+ \alpha_1(E) \, (i_{\alpha_2\Sha}d\omega)
- \alpha_2(E) \, (i_{\alpha_1\Sha}d\omega)
\\
& \nonumber
+ h_1 \, (L_E\alpha_2 -  \alpha_2(E) \, L_E \omega)
- h_2 \, (L_E\alpha_1
- \alpha_1(E) \, L_E \omega)
\, ;
\\
& \nonumber
\alpha_1\Sha{\tecka}h_2 -  \alpha_2\Sha{\tecka}h_1 - d\omega(\alpha_1\Sha, \alpha_2\Sha)
\\
& \nonumber
+ h_1 \,(E{\tecka}h_2 + \Lambda(L_E\omega, \alpha_2)) 
-  h_2 \, ( E{\tecka}h_1 +  \Lambda(L_E\omega, \alpha_1))
\big) 
\end{align}
which defines a Lie algebra structure on 
$\Omega^1(\f M)\times  C^\infty(\f M)$ given by the almost-cosymplectic-contact structure $(\omega,\Omega)$. Moreover, we have
\begin{equation*}
X_{\db[(\alpha_1  ,h_1)  ;(\alpha_2,h_2)\db]}
=
[X_{(\alpha_1  ,h_1)}  ,X_{(\alpha_2,h_2)}] \,.
\end{equation*}

\smallskip
Let $ T $ be a tensor field of any type.  An \emph{infinitesimal symmetry}  of $T$ is a vector
field $ X $ on $\f M $ such that $L_X  T = 0$. From 
$$ 
L_{[X,Y]} = L_X L_Y - L_YL_X 
$$ 
it follows that infinitesimal symmetries of $T$ form a Lie subalgebra, denoted by $\mathcal{L}(T)$, of the Lie algebra $(\M V^1(\f M); [,])$ of vector fields on $\f M$. Moreover, the Lie subalgebra $(\mathcal{L}(T); [,])$ is generated by the Lie subalgebra $(\Gis(T); \db[,\db]) \subset (\Omega^1(\f M)\times  C^\infty(\f M); \db[ , \db])$ of generators of infinitesimal symmetries of $T$. 

\begin{rmk}\label{Rm: 2.1}
{\rm
 Let as recall that a {\it Lie algebroid structure} of a a vector
bundle $\pi:\f E \to \f M$ is defined by
 (see, for instance, \cite{Mac87}):

–- a composition law $(s_1, s_2) \longmapsto \db[s_1, s_2\db]$ on the space $\Gamma(\pi)$ of smooth sections of $\f E$, for which $\Gamma(\pi)$ becomes a Lie algebra,

–- a smooth vector bundle map $\rho : \f E \to T\f M$, where $T\f M$ is the tangent bundle of $\f M$, which satisfies the following two properties:

(i) the map $s \to  \rho \com s$ is a Lie algebras homomorphism from the Lie algebra $(\Gamma(\pi); \db[,\db])$ into the Lie algebra $(\M V^{1}(\f M);[,])$;

(ii) for every pair $(s_1, s_2)$ of smooth sections of $\pi$, and every smooth function $f : \f M \to \Bbb R$, we have the Leibniz-type formula,
\begin{equation}\label{Eq: 1.5}
\db[ s_1, f\, s_2\db] = f\,\db[s_1, s_2\db] +
(i_{(\rho \com s_1)}df)\,
s_2\,.
\end{equation}

The vector bundle $\pi:\f E \to \f M$ equipped with its Lie algebroid structure will
be called a {\it Lie algebroid}; the composition law
$(s_1, s_2) \mapsto \db[ s_1, s_2\db]$ will be called the {\it bracket} and the map $\rho : \f E \to T\f M$ the
{\it anchor}.

The pair $(\alpha,h)$ can be considered as a section $\f M \to T^*\f M \times \mathbb{R}$ and the bracket \eqref{Eq: 2.4} defines the Lie bracket of sections of the vector bundle $\f E = T^*\f M \times  \mathbb{R} \to \f M$. A natural question arise if this bracket defines on $\f E$ the structure of a Lie algebroid with the anchor 
$\rho : \f E \to T\f M$ such that
 $\rho \circ (\alpha,h) = X_{(\alpha,h)} $. The answer is no because, for $ f\in C^{\infty}(\f M) $,
\begin{align*}
\db[(\alpha_1  ,h_1) ;f\,(\alpha_2,h_2)\db] 
& =
 f\, \db[(\alpha_1  ,h_1)  ;(\alpha_2,h_2)\db] 
 \\
 & \quad
 + (X_{(\alpha_1,h_1)}{\tecka}f)\, (\alpha_2,h_2) + \Lambda(\alpha_1,\alpha_2)\, df\,,
\end{align*}
i.e., the Leibniz-type formula \eqref{Eq: 1.5} is not satisfied. 
}
\end{rmk}

\subsection{Infinitesimal symmetries of $ \omega $}

\begin{thm}\label{Th: 2.2}
A vector field $ X $ on $ \f M $ is an infinitesimal symmetry of $ \omega $, i.e. $ L_X\omega = 0 $, if and only if $ X = X_{(\alpha,h)} $, where
$\alpha$ and $h$ are related by the following condition
\begin{equation}\label{Eq: 2.5}
i_{\alpha\Sha}d\omega + h\, i_E\, d\omega + dh = 0\,.
\end{equation}
\end{thm}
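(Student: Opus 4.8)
The plan is to apply Cartan's magic formula and to exploit the splitting \eqref{Eq: 1.4} together with the duality relations \eqref{Eq: 1.1}. Since the splitting shows that every vector field on $\f M$ is of the form $X_{(\alpha,h)} = \alpha\Sha + h\,E$ for some 1-form $\alpha$ and function $h$, it suffices to compute $L_{X_{(\alpha,h)}}\omega$ and to determine when it vanishes; this will simultaneously establish both the necessity and the sufficiency of \eqref{Eq: 2.5}.

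First I would write $L_X\omega = i_X\,d\omega + d(i_X\omega)$. The exact part is easy to handle: using $X = \alpha\Sha + h\,E$ we get $i_X\omega = \omega(\alpha\Sha) + h\,\omega(E)$, and here $\omega(E) = i_E\omega = 1$ while $\omega(\alpha\Sha) = \Lambda(\omega,\alpha) = 0$ because $i_\omega\Lambda = 0$. Hence $i_X\omega = h$ and $d(i_X\omega) = dh$. Next, by linearity of the interior product, $i_X\,d\omega = i_{\alpha\Sha}\,d\omega + h\,i_E\,d\omega$. Combining the two computations gives
\begin{equation*}
L_X\omega = i_{\alpha\Sha}\,d\omega + h\,i_E\,d\omega + dh\,,
\end{equation*}
so that $L_X\omega = 0$ holds precisely when \eqref{Eq: 2.5} is satisfied.

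The computation is short, so there is no serious obstacle; the only point requiring care is the evaluation $\omega(\alpha\Sha) = 0$, which rests on the normalization $i_\omega\Lambda = 0$ from the dual pair \eqref{Eq: 1.1}. I would also record, as a consistency remark, that although the splitting \eqref{Eq: 1.4} is not unique (the representative $\alpha$ of a given $X$ is determined only modulo $\langle\omega\rangle$, while $h$ is determined), the condition \eqref{Eq: 2.5} is genuinely a condition on $X$: replacing $\alpha$ by $\alpha + f\,\omega$ leaves $\alpha\Sha$ unchanged, since $\omega\Sha = \Lambda\Sha(\omega) = 0$, and therefore leaves the term $i_{\alpha\Sha}\,d\omega$ unchanged, the remaining terms $h\,i_E\,d\omega + dh$ depending only on $h$.
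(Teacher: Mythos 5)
Your proof is correct and follows essentially the same route as the paper: Cartan's formula applied to $X_{(\alpha,h)} = \alpha\Sha + h\,E$, using $i_{\alpha\Sha}\omega = 0$ and $i_E\omega = 1$ to reduce $L_X\omega$ to the left-hand side of \eqref{Eq: 2.5}. Your closing remark on the well-definedness of the condition under the replacement $\alpha \mapsto \alpha + f\,\omega$ (using $\Lambda\Sha(\omega)=0$) is a worthwhile addition, but it does not alter the argument, which coincides with the paper's.
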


\begin{proof}
Any vector field on $\f M$ is of the form $ X_{(\alpha,h)} $.
Then we get
$$
0 = L_{X_{(\alpha,h)}}\omega = i_{\alpha\Sha}d\omega + i_{h\, E} d\omega + di_{\alpha\Sha}\omega + di_{h\, E}\omega
$$
and from $i_{\alpha\Sha}\omega=0$ and $i_E \omega = 1$ Theorem \ref{Th: 2.2} follows.
\end{proof}

\begin{lem}\label{Lm: 2.3}
A vector field $X_{(\alpha,h)}$ is an infinitesimal symmetry of $\omega$
if and only if the following equations are satisfied:
\begin{enumerate} 
\item  $i_E dh + i_E i_{\alpha\Sha}d\omega
= E{\tecka}h + \Lambda(L_E\ome,\alpha) = 0$\,,
\item $ d\omega(\alpha\Sha,\beta\Sha) + h \, d\omega(E,\beta\Sha) + dh(\beta\Sha) = 0$ for any 1-form $ \beta $\,.
\end{enumerate}
\end{lem}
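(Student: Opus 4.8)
The plan is to start from the single vector equation \eqref{Eq: 2.5} established in Theorem~\ref{Th: 2.2}, namely $i_{\alpha\Sha}d\omega + h\, i_E d\omega + dh = 0$, and to show that it is equivalent to the two scalar conditions (1) and (2) by decomposing it according to the splitting of $T^*\f M = \im(\Omega\Fla) \oplus \langle\omega\rangle$ recalled in the Preliminaries. The left-hand side of \eqref{Eq: 2.5} is a 1-form, and a 1-form vanishes if and only if it vanishes when contracted against a spanning set of vector fields. Since $T\f M = \im(\Lambda\Sha) \oplus \langle E\rangle$, it suffices to test \eqref{Eq: 2.5} against $E$ and against arbitrary $\beta\Sha \in \im(\Lambda\Sha)$. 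Condition~(1) will come from contracting with $E$, and condition~(2) from contracting with $\beta\Sha$.

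First I would contract \eqref{Eq: 2.5} with $E$. This gives $i_E i_{\alpha\Sha}d\omega + h\, i_E i_E d\omega + i_E dh = 0$. The middle term $i_E i_E d\omega$ vanishes since $d\omega$ is a 2-form (contracting twice with the same vector field), so we are left with $i_E i_{\alpha\Sha}d\omega + i_E dh = 0$, which is exactly the first equality in~(1), written as $i_E dh + i_E i_{\alpha\Sha}d\omega = 0$. The identification $i_E dh = E{\tecka}h$ is the definition of the derivative along $E$, and the identification $i_E i_{\alpha\Sha}d\omega = \Lambda(L_E\ome,\alpha)$ is the remaining computational point; I expect it to follow from unwinding $\alpha\Sha = \Lambda\Sha(\alpha)$ and using the skew-symmetry of $\Lambda$ together with the relation defining $L_E\omega$. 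This rewriting is the one genuinely nontrivial identity in the lemma.

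Next I would contract \eqref{Eq: 2.5} with $\beta\Sha$ for an arbitrary 1-form $\beta$. This yields $i_{\beta\Sha}i_{\alpha\Sha}d\omega + h\, i_{\beta\Sha}i_E d\omega + i_{\beta\Sha}dh = 0$, which upon writing the contractions as evaluations of $d\omega$ and $dh$ becomes $d\omega(\alpha\Sha,\beta\Sha) + h\, d\omega(E,\beta\Sha) + dh(\beta\Sha) = 0$, precisely condition~(2). Since $\beta$ ranges over all 1-forms, $\beta\Sha$ ranges over all of $\im(\Lambda\Sha) = \Ker(\omega)$, so together with the $E$-direction already handled in~(1) these two families of test vectors span $T\f M$ pointwise. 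Conversely, if both (1) and (2) hold then \eqref{Eq: 2.5} is annihilated by $E$ and by every element of $\im(\Lambda\Sha)$, hence vanishes identically; this gives the reverse implication and, combined with Theorem~\ref{Th: 2.2}, the full equivalence.

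The main obstacle is the computational identity $i_E i_{\alpha\Sha}d\omega = \Lambda(L_E\omega,\alpha)$ needed to bring condition~(1) into the stated form; everything else is the routine bookkeeping of decomposing a 1-form against the splitting $T\f M = \im(\Lambda\Sha)\oplus\langle E\rangle$. I would verify that identity by expanding $\Lambda(L_E\omega,\alpha) = (L_E\omega)(\Lambda\Sha(\alpha))$ up to sign conventions and matching it against $d\omega(E,\alpha\Sha)$, taking care that the musical morphism $\Sha = \Lambda\Sha$ and the contraction $i_{\alpha\Sha}$ are consistently oriented.
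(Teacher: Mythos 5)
Your proposal is correct and takes essentially the same route as the paper's own proof: the paper likewise evaluates the 1-form on the left-hand side of \eqref{Eq: 2.5} on $E$ (where $h\,i_E i_E d\omega$ drops out) and on $\beta\Sha$ for arbitrary $\beta$, and obtains the converse from the splitting $T\f M = \im(\Lambda\Sha)\oplus\langle E\rangle$. The one identity you left to verify, $i_E i_{\alpha\Sha}d\omega = \Lambda(L_E\omega,\alpha)$, goes through exactly as you anticipated: since $i_E\omega = 1$ is constant, Cartan's formula gives $L_E\omega = i_E d\omega$, whence $i_E i_{\alpha\Sha}d\omega = -\,(i_E d\omega)(\alpha\Sha) = -(L_E\omega)(\alpha\Sha) = -\Lambda(\alpha,L_E\omega) = \Lambda(L_E\omega,\alpha)$ by skew-symmetry of $\Lambda$.
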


\begin{proof}
If we evaluate the 1-form on the left hand side of
  \eqref{Eq: 2.5} on the Reeb vector field $E$ we get 
$ i_E dh + i_E i_{\alpha\Sha}d\omega
 = E{\tecka} h - i_{\alpha\Sha}i_E d\omega 
 = E{\tecka}h - \Lam(\alpha,L_E\omega)
= 0 $.  
On the other hand if we evaluate this form on $\beta\Sha$, for any 1-form $\beta$, we get (2).

The inverse follows from the splitting $ T\f M =  \im(\Lambda\Sha)\oplus \langle E \rangle $, i.e. a 1-form with zero values on $E$ and $\beta\Sha$, for any 1-form $\beta$, is the zero form.
\end{proof}

\begin{lem}\label{Lm: 2.4}
Let us assume two infinitesimal symmetries $ X_{(\alpha_i,h_i)} = \alpha_i\Sha + h_i\, E $, $i=1,2$, of $\omega$. Then
\begin{align}\label{Eq: 2.6}
[X_{(\alpha_1  ,h_1)} & ,X_{(\alpha_2,h_2)}] 
 =
\big(
d\Lambda(\alpha_1,\alpha_2) 
- i_{\alpha_2\Sha} d\alpha_1 
+ i_{\alpha_1\Sha} d\alpha_2
\\
& \nonumber
+ \alpha_1(E) \, (i_{\alpha_2\Sha}d\omega)
- \alpha_2(E) \, (i_{\alpha_1\Sha}d\omega)
\\
& \nonumber
+ h_1 \, (L_E\alpha_2 -  \alpha_2(E) \, L_E \omega)
- h_2 \, (L_E\alpha_1
- \alpha_1(E) \, L_E \omega)
\big)\Sha 
\\
& \nonumber
+ \big(
\alpha_1\Sha{\tecka}h_2 -  \alpha_2\Sha{\tecka}h_1 - d\omega(\alpha_1\Sha, \alpha_2\Sha)
\big) \, E\,
\end{align}
and we obtain the bracket
\begin{align}\label{Eq: 2.7}
\db[(\alpha_1  ,h_1) & ;(\alpha_2,h_2)\db] 
 =
\big(
d\Lambda(\alpha_1,\alpha_2) 
- i_{\alpha_2\Sha} d\alpha_1 
+ i_{\alpha_1\Sha} d\alpha_2
\\
& \nonumber
+ \alpha_1(E) \, (i_{\alpha_2\Sha}d\omega)
- \alpha_2(E) \, (i_{\alpha_1\Sha}d\omega)
\\
& \nonumber
+ h_1 \, (L_E\alpha_2 -  \alpha_2(E) \, L_E \omega)
- h_2 \, (L_E\alpha_1
- \alpha_1(E) \, L_E \omega)
\, ;
\\
& \nonumber
\alpha_1\Sha{\tecka}h_2 -  \alpha_2\Sha{\tecka}h_1 - d\omega(\alpha_1\Sha, \alpha_2\Sha)
\big) 
\\
& \qquad\qquad\nonumber =
\big(
d\Lambda(\alpha_1,\alpha_2) 
- i_{\alpha_2\Sha} d\alpha_1 
+ i_{\alpha_1\Sha} d\alpha_2
\\
& \nonumber
- \alpha_1(E) \, dh_2
+ \alpha_2(E) \, dh_1
+ h_1 \, L_E\alpha_2 - h_2 \, L_E\alpha_1
\, ;
\\
& \nonumber
d\omega(\alpha_1\Sha, \alpha_2\Sha)
+ h_1 \, d\omega(E,\alpha_2\Sha) - h_2 \,d\omega(E, \alpha_1\Sha)
\big) \,.
\end{align}
\end{lem}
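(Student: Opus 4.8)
The plan is to start from the two general formulas already established, namely the Lie bracket of vector fields \eqref{Eq: 2.1} and the induced bracket of pairs \eqref{Eq: 2.4}, and to feed into them the two constraints characterising infinitesimal symmetries of $\omega$ collected in Lemma \ref{Lm: 2.3}. Recall that these read, for each $i=1,2$,
\begin{equation*}
E{\tecka}h_i + \Lambda(L_E\omega,\alpha_i) = 0, \qquad d\omega(\alpha_i\Sha,\beta\Sha) + h_i\, d\omega(E,\beta\Sha) + dh_i(\beta\Sha) = 0,
\end{equation*}
the second holding for every $1$-form $\beta$, and are equivalent to the single identity \eqref{Eq: 2.5}. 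Throughout I would also use the elementary fact that $L_E\omega = i_E d\omega$, which follows from $i_E\omega = 1$.

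For the vector field bracket \eqref{Eq: 2.6} and the first form of \eqref{Eq: 2.7}, I would look only at the $E$-component of \eqref{Eq: 2.4}. There the extra terms are exactly $h_1\,(E{\tecka}h_2 + \Lambda(L_E\omega,\alpha_2)) - h_2\,(E{\tecka}h_1 + \Lambda(L_E\omega,\alpha_1))$, and each parenthesis vanishes by the first constraint of Lemma \ref{Lm: 2.3}. Hence the $E$-component collapses to $\alpha_1\Sha{\tecka}h_2 - \alpha_2\Sha{\tecka}h_1 - d\omega(\alpha_1\Sha,\alpha_2\Sha)$, while the $\Sha$-component is untouched; writing the resulting pair back as $(\cdots)\Sha + (\cdots)\,E$ gives \eqref{Eq: 2.6}, and reading off the pair gives the first equality in \eqref{Eq: 2.7}.

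The second equality in \eqref{Eq: 2.7} is where the symmetry conditions are really used, and this is the step I expect to be the main obstacle. For the $1$-form component I would group the terms carrying the factors $\alpha_1(E)$ and $\alpha_2(E)$, obtaining $\alpha_1(E)\,[\,i_{\alpha_2\Sha}d\omega + h_2\,L_E\omega\,] - \alpha_2(E)\,[\,i_{\alpha_1\Sha}d\omega + h_1\,L_E\omega\,]$ together with $h_1\,L_E\alpha_2 - h_2\,L_E\alpha_1$; using $L_E\omega = i_E d\omega$ and \eqref{Eq: 2.5}, each bracketed $1$-form equals $-dh_i$, which turns these contributions into $-\alpha_1(E)\,dh_2 + \alpha_2(E)\,dh_1$ and yields the claimed $1$-form. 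For the function component I would apply the second constraint of Lemma \ref{Lm: 2.3} twice: taking $\beta = \alpha_2$ with the index-$1$ symmetry expresses $\alpha_2\Sha{\tecka}h_1 = dh_1(\alpha_2\Sha)$ as $-d\omega(\alpha_1\Sha,\alpha_2\Sha) - h_1\,d\omega(E,\alpha_2\Sha)$, and taking $\beta = \alpha_1$ with the index-$2$ symmetry expresses $\alpha_1\Sha{\tecka}h_2$ as $d\omega(\alpha_1\Sha,\alpha_2\Sha) - h_2\,d\omega(E,\alpha_1\Sha)$, using the skew symmetry $d\omega(\alpha_2\Sha,\alpha_1\Sha) = -d\omega(\alpha_1\Sha,\alpha_2\Sha)$. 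Substituting both into $\alpha_1\Sha{\tecka}h_2 - \alpha_2\Sha{\tecka}h_1 - d\omega(\alpha_1\Sha,\alpha_2\Sha)$ flips the sign of the $d\omega(\alpha_1\Sha,\alpha_2\Sha)$ term and produces the two $d\omega(E,\cdot)$ terms, giving exactly the function component in the last line of \eqref{Eq: 2.7}. The only real care needed is to keep the antisymmetry of $d\omega$ and the index labels straight when combining the two instances of the constraint.
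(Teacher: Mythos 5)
Your proposal is correct and follows exactly the route of the paper, whose proof is the one-line statement that the lemma follows from Lemma \ref{Lm: 2.1}, Lemma \ref{Lm: 2.3} and \eqref{Eq: 2.4}; you simply carry out the details: the $E$-component terms cancel by condition (1) of Lemma \ref{Lm: 2.3}, and the second form of \eqref{Eq: 2.7} follows from \eqref{Eq: 2.5} (with $L_E\omega = i_E d\omega$) and from condition (2) applied with $\beta = \alpha_2$ and $\beta = \alpha_1$. The substitutions and sign bookkeeping in your computation check out, so nothing is missing.
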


\begin{proof}
It follows from Lemmas \ref{Lm: 2.1} and \ref{Lm: 2.3} and \eqref{Eq: 2.4}.
\end{proof}

According to  Lemma \ref{Lm: 2.4} the Lie algebra $(\mathcal{L}(\omega); [,])$ is generated by the Lie subalgebra of pairs $(\alpha,h)\in (\Gis(\omega); \db[, \db]) \subset (\Omega^1(\f M)\times  C^\infty(\f M); \db[, \db])$ satisfying the condition \eqref{Eq: 2.5}
(or conditions (1) and (2) of Lemma \ref{Lm: 2.3}) with the bracket \eqref{Eq: 2.7}. 

\subsection{Infinitesimal symmetries of $ \Omega $}

\begin{thm}\label{Th: 2.5}
A vector field $ X $ on $ \f M $ is an infinitesimal symmetry of $ \Omega $, i.e. $ L_X\Omega = 0 $, if and only if $ X = X_{(\alpha,h)}$, where
\begin{equation}\label{Eq: 2.8}
d\alpha = 0\,, \quad \alpha(E) = 0\,,
\end{equation}
i.e. $\alpha$ is a closed 1-form in  $\Ker(E)$.
\end{thm}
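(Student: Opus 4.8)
The plan is to combine Cartan's magic formula with the closedness of $\Omega$ built into the almost-cosymplectic-contact axioms. Since every vector field on $\f M$ has the form $X = X_{(\alpha,h)} = \alpha\Sha + h\,E$, I would first write
$$
L_X\Omega = i_X\,d\Omega + d\,i_X\Omega = d\,i_X\Omega\,,
$$
using $d\Omega = 0$ from \eqref{Eq: 1.2}. Thus $X$ is an infinitesimal symmetry of $\Omega$ if and only if the $1$-form $i_X\Omega$ is closed, and the whole problem reduces to computing $i_X\Omega$ explicitly in terms of the pair $(\alpha,h)$.

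For that computation I would expand $i_X\Omega = i_{\alpha\Sha}\Omega + h\,i_E\Omega$. By the defining identities \eqref{Eq: 1.1} of the dual pair we have $i_E\Omega = 0$, so $h$ drops out entirely and $i_X\Omega = i_{\alpha\Sha}\Omega = \Omega\Fla(\alpha\Sha)$. Since $\alpha\Sha = \Lambda\Sha(\alpha)$ and $\Omega\Fla\com\Lambda\Sha = q_1$, I would invoke the explicit form of the projection $q_1$ recorded in Section~1 to get
$$
i_X\Omega = q_1(\alpha) = \alpha - \alpha(E)\,\omega\,,
$$
so that $L_X\Omega = d\alpha - d\big(\alpha(E)\big)\wed\omega - \alpha(E)\,d\omega$.

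The remaining, and only delicate, point is the non-uniqueness of the splitting \eqref{Eq: 1.4}: because $\omega\Sha = 0$, the form $\alpha$ is determined by $X$ only up to addition of a multiple of $\omega$, and $(\alpha + c\,\omega)(E) = \alpha(E) + c$. Hence there is a unique representative with $\alpha(E) = 0$, and I would carry out the argument with it. For this canonical representative $i_X\Omega = q_1(\alpha) = \alpha$, whence $L_X\Omega = d\alpha$, and both implications follow at once: if $X_{(\alpha,h)}$ is a symmetry of $\Omega$ then the representative with $\alpha(E)=0$ has $d\alpha = 0$, and conversely $\alpha(E)=0$ together with $d\alpha = 0$ gives $L_X\Omega = d\alpha = 0$. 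The main thing to be careful about is exactly this gauge choice: without fixing $\alpha(E)=0$, the bare condition $d\,i_X\Omega = 0$ reads $d(\alpha - \alpha(E)\,\omega) = 0$, which is strictly weaker than $d\alpha=0$ and does not by itself force $\alpha(E)=0$ — for instance $X = h\,E$ is always a symmetry of $\Omega$ and corresponds to $\alpha = 0$ in the canonical gauge.
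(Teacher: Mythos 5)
Your proposal is correct and follows essentially the same route as the paper: Cartan's formula with $d\Omega=0$ and $i_E\Omega=0$ reduces everything to $i_X\Omega = \Omega\Fla(\alpha\Sha) = \alpha - \alpha(E)\,\omega$, and the closed form $\alpha - \alpha(E)\,\omega$ is precisely the representative with $\alpha(E)=0$ that the paper constructs. Your explicit discussion of the gauge freedom in the splitting \eqref{Eq: 1.4} is the same point the paper handles by replacing $\beta$ with $\alpha = \beta - \beta(E)\,\omega$, so nothing is missing.
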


\begin{proof}
We have the splitting \eqref{Eq: 1.4} and consider a vector field $X_{(\beta,h)}$. Then, from $d\Omega=0$ and $i_E \Omega = 0$, we get
$$
0 = L_{X_{(\beta,h)}}\Omega = di_{\beta\Sha}\Omega =
d(\beta\Sha)\Fla =  d(\beta - \beta(E)\, \omega)
$$
which implies that the closed 1-form $\alpha = \beta - \beta(E) \, \omega$ is
such that $\alpha\Sha = \beta \Sha$. Moreover, $\alpha(E) = \beta (E) - \beta(E) \, \omega(E) = 0$.  
\end{proof}

In what follows we shall denote by $\Ker_{cl}(E)$ the sheaf of closed 1-forms vanishing on $E$. 
From Theorem \ref{Th: 2.5} it follows that the Lie algebra $(\mathcal{L}(\Omega); [,]) $ of infinitesimal symmetries of $ \Omega $ is generated by pairs $ (\alpha,h) $, where $ \alpha = \Ker_{cl}(E)  $. In this case the bracket \eqref{Eq: 2.4} is reduced to the bracket
\begin{align}
\db[(
\alpha_{1},h_{1}) \nonumber 
;
&
(\alpha_{2},h_{2})\db]
: =
\big(
d\Lambda(\alpha_{1},\alpha_{2}); 
\\
& \label{Eq: 2.10a}
\alpha_{1}\Sha .h_{2} - \alpha_{2}\Sha{\tecka}h_{1} 
- d\omega(\alpha_{1}\Sha,\alpha_{2}\Sha)
\\
&
 + h_1\, (E{\tecka}h_2 + \Lambda(L_E\omega,\alpha_2 ))
\nonumber
- h_2\, (E{\tecka}h_1 + \Lambda(L_E\omega,\alpha_1 ))
\big)\,
\end{align}
which defines a Lie algebra structure on $\Ker_{cl}(E) \times  C^\infty(\f M)$ which can be considered as a Lie subalgebra 
$(\Gis(\Omega); \db[,\db]) \subset (\Omega^1(\f M) \times  C^\infty(\f M); \db[,\db])$. Really, $\Ker_{cl}(E) \times  C^\infty(\f M)$ is closed with respect to the bracket \eqref{Eq: 2.10a} which follows from
\begin{align*}
i_E d \Lambda
&
(\alpha_{1},\alpha_{2})
=
L_E(\Lambda(\alpha_{1},\alpha_{2}))
\\
& =
(L_E\Lam)(\alpha_{1},\alpha_{2}) + \Lambda(L_E\alpha_1,\alpha_2)
+ \Lambda(\alpha_1,L_E\alpha_2)
\\
& =
i_{[E,\Lambda]} (\alpha_1 \wedge \alpha_2)
=
- i_{E \wed (L_E\omega)\Sha} (\alpha_1 \wedge \alpha_2) 
=
0\,. 
\end{align*}

\begin{rmk}
{\rm
Any closed 1-form can be locally considered as $\alpha = df$ for a function $f \in C^\infty(\f M)$. Moreover, from $\alpha\in \Ker_{cl}(E)$, the function $f$ satisfies $df(E) = E{\tecka}f =0$. Hence, infinitesimal symmetries of $\Omega$ are locally generated by pairs of functions $(f,h)$ where $E{\tecka}f = 0$. Lie algebras of local generators of infinitesimal symmetries of the almost-cosymplectic-contact structure are studied in \cite{Jan16}.   
}
\end{rmk}

\subsection{Infinitesimal symmetries of the Reeb vector field}

\begin{thm}\label{Th: 2.6}
A vector field $ X $ on $ \f M $ is an infinitesimal symmetry of $ E $, i.e. $ L_X E = [X,E] = 0 $, if and only if $ X = X_{(\alpha,h)}$, where
$\alpha$ and $h$ satisfy the following conditions
\begin{align}\label{Eq: 2.10}
(L_E\alpha - \alpha(E)\, L_E\omega)\Sha  
& = 
0\,,
\\
E{\tecka}h  + \Lambda(L_E\omega,\alpha) 
& = \label{Eq: 2.11}
0\,.
\end{align}
\end{thm}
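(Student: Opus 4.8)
The plan is to compute the Lie bracket $[X,E]$ directly, using the splitting of $T\f M$ and the formula \eqref{Eq: 2.2} for $[E,\alpha\Sha]$ established earlier. Writing $X = X_{(\alpha,h)} = \alpha\Sha + h\,E$ and exploiting antisymmetry of the Lie bracket of vector fields, I would start from $[X,E] = -[E,\alpha\Sha + h\,E]$. By bilinearity and the Leibniz rule for the bracket under multiplication by a function, this splits as $-[E,\alpha\Sha] - [E,h\,E]$, where $[E,h\,E] = (E{\tecka}h)\,E$ since $[E,E]=0$.

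Next I would substitute \eqref{Eq: 2.2} for $[E,\alpha\Sha]$, which yields
\[
[E, X_{(\alpha,h)}] = \big(L_E\alpha - \alpha(E)\,(L_E\omega)\big)\Sha + \big(\Lambda(L_E\omega,\alpha) + E{\tecka}h\big)\,E.
\]
Since $[X,E] = -[E,X]$, the condition $[X,E]=0$ is equivalent to the vanishing of the right-hand side above.

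Finally I would invoke the splitting $T\f M = \im(\Lambda\Sha)\oplus\langle E\rangle$ from \eqref{Eq: 1.4}: the first summand lies in $\im(\Lambda\Sha)$ and the second is a multiple of $E$, so by directness of the sum the expression vanishes if and only if each summand vanishes separately. The vanishing of the $\im(\Lambda\Sha)$-part gives condition \eqref{Eq: 2.10}, that is $(L_E\alpha - \alpha(E)\,L_E\omega)\Sha = 0$, and the vanishing of the coefficient of $E$ gives condition \eqref{Eq: 2.11}, that is $E{\tecka}h + \Lambda(L_E\omega,\alpha) = 0$. Conversely, if both \eqref{Eq: 2.10} and \eqref{Eq: 2.11} hold, the displayed expression for $[E,X_{(\alpha,h)}]$ is manifestly zero, so $X_{(\alpha,h)}$ is an infinitesimal symmetry of $E$.

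I do not expect a serious obstacle: the argument is a direct application of \eqref{Eq: 2.2} together with the direct-sum decomposition. The only point requiring a little care is the \emph{independence} of the two conditions, which is exactly guaranteed by the uniqueness of $\alpha\Sha$ and $h$ in the decomposition \eqref{Eq: 1.4}. It is worth remarking that, unlike Theorems \ref{Th: 2.2} and \ref{Th: 2.5}, this statement characterizes the generating pairs $(\alpha,h)$ but does not by itself assert that they form a Lie subalgebra under $\db[\,,\db]$; verifying closure would be the natural next step, to be carried out by means of Lemma \ref{Lm: 2.1}.
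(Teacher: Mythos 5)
Your proof is correct and follows essentially the same route as the paper: both compute the bracket of $X_{(\alpha,h)}$ with $E$ via formula \eqref{Eq: 2.2}, obtaining $-\big(L_E\alpha - \alpha(E)\,L_E\omega\big)\Sha - \big(E{\tecka}h + \Lambda(L_E\omega,\alpha)\big)\,E$, and then conclude from the directness of the splitting $T\f M = \im(\Lambda\Sha)\oplus\langle E\rangle$ that both components must vanish separately. The only cosmetic difference is that you work with $[E,X]$ and negate, while the paper writes $[X,E]$ directly.
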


\begin{proof}
We have 
$$
0 = [X_{(\alpha,h)}, E] = [\alpha\Sha,E] + [h\, E, E]
$$
and from \eqref{Eq: 2.2} we get
$$
[X_{(\alp,h)}, E] = - \big(
L_E\alpha - \alpha(E)\, L_E\omega
\big)\Sha 
- \big(
E{\tecka}h + \Lambda(L_E\omega,\alpha
\big)\, E\,
$$
which proves Theorem \ref{Th: 2.6}.
\end{proof}

\begin{rmk}
{\rm
The condition \eqref{Eq: 2.10} of Theorem \ref{Th: 2.6} is equivalent to the condition
\begin{equation}
(L_E\alpha)(\beta\Sha) - \alpha(E) \, (L_E\omega)(\beta\Sha) = 0
\end{equation}
for any 1-form $\beta$.
}
\end{rmk}

\begin{lem}\label{Lm: 2.7}
The restriction of the bracket \eqref{Eq: 2.4}
to pairs $(\alpha,h)$ satisfying the conditions \eqref{Eq: 2.10} and \eqref{Eq: 2.11} is the bracket
\begin{align}\label{Eq: 2.12}
\db[(\alpha_1  ,h_1) & ;(\alpha_2,h_2)\db] 
 =
\big(
d\Lambda(\alpha_1,\alpha_2) 
- i_{\alpha_2\Sha} d\alpha_1 
+ i_{\alpha_1\Sha} d\alpha_2
\\
& \quad\nonumber
+ \alpha_1(E) \, (i_{\alpha_2\Sha}d\omega)
- \alpha_2(E) \, (i_{\alpha_1\Sha}d\omega)
\, ;
\\
& \quad \nonumber
 \alpha_1\Sha{\tecka}h_2 -  \alpha_2\Sha{\tecka}h_1 - d\omega(\alpha_1\Sha, \alpha_2\Sha)
\big) 
\\
& = \nonumber
\big(
- d\Lambda(\alpha_1,\alpha_2) 
- L_{\alpha_2\Sha} \alpha_1 
+ L _{\alpha_1\Sha} \alpha_2
\\
& \quad\nonumber
+ \alpha_1(E) \, (L_{\alpha_2\Sha}\omega)
- \alpha_2(E) \, (L_{\alpha_1\Sha}\omega)
\, ;
\\
& \quad\nonumber
\alpha_1\Sha{\tecka}h_2 -  \alpha_2\Sha{\tecka}h_1 - d\omega(\alpha_1\Sha, \alpha_2\Sha)
\big) 
\end{align}
which defines a Lie algebra structure on the subsheaf of $
\Omega^1(\f M)\times C^\infty(\f M)$ of pairs of 1-forms and functions satisfying conditions \eqref{Eq: 2.10} and \eqref{Eq: 2.11}.
\end{lem}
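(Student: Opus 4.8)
The plan is to prove Lemma~\ref{Lm: 2.7} in two parts: first establish the stated reduction of the bracket~\eqref{Eq: 2.4} on the subsheaf cut out by~\eqref{Eq: 2.10} and~\eqref{Eq: 2.11}, and second verify that this subsheaf is closed under the bracket so that the Lie algebra structure restricts. For the first part, I would start from the general bracket~\eqref{Eq: 2.4} and substitute the constraint~\eqref{Eq: 2.11}, namely $E{\tecka}h_i + \Lambda(L_E\omega,\alpha_i) = 0$. This immediately kills the two terms $h_1(E{\tecka}h_2 + \Lambda(L_E\omega,\alpha_2))$ and $-h_2(E{\tecka}h_1 + \Lambda(L_E\omega,\alpha_1))$ in the $E$-component, leaving exactly the three surviving terms $\alpha_1\Sha{\tecka}h_2 - \alpha_2\Sha{\tecka}h_1 - d\omega(\alpha_1\Sha,\alpha_2\Sha)$. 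For the $\im(\Lambda\Sha)$-component, the condition~\eqref{Eq: 2.10} says precisely that $(L_E\alpha_i - \alpha_i(E)\,L_E\omega)\Sha = 0$, so the two terms $h_1(L_E\alpha_2 - \alpha_2(E)L_E\omega)$ and $-h_2(L_E\alpha_1 - \alpha_1(E)L_E\omega)$, after applying $\Sha$, vanish. This yields the first displayed form of~\eqref{Eq: 2.12}.

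Next I would pass from the first expression to the second (the $L_{\alpha_i\Sha}$ form) using the Cartan magic formula together with the skew-symmetry identity for $d\Lambda$. The key computation is to rewrite $-i_{\alpha_2\Sha}d\alpha_1 + \alpha_1(E)\,i_{\alpha_2\Sha}d\omega$: by Cartan's formula $L_{\alpha_2\Sha}\alpha_1 = i_{\alpha_2\Sha}d\alpha_1 + d\,i_{\alpha_2\Sha}\alpha_1$, and since $i_{\alpha_2\Sha}\alpha_1 = \Lambda(\alpha_2,\alpha_1) = -\Lambda(\alpha_1,\alpha_2)$, one gets $i_{\alpha_2\Sha}d\alpha_1 = L_{\alpha_2\Sha}\alpha_1 + d\Lambda(\alpha_1,\alpha_2)$, and symmetrically $i_{\alpha_1\Sha}d\alpha_2 = L_{\alpha_1\Sha}\alpha_2 - d\Lambda(\alpha_1,\alpha_2)$. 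Substituting these and likewise converting the $i_{\alpha_i\Sha}d\omega$ terms into $L_{\alpha_i\Sha}\omega$ (using $i_{\alpha_i\Sha}\omega = 0$, so $L_{\alpha_i\Sha}\omega = i_{\alpha_i\Sha}d\omega$) collects the $d\Lambda$ contributions into the single term $-d\Lambda(\alpha_1,\alpha_2)$ appearing in the second form. This is bookkeeping, but one must track the signs carefully since three separate $\pm d\Lambda(\alpha_1,\alpha_2)$ pieces coalesce.

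The part I expect to be the genuine obstacle is the closure claim: that if $(\alpha_1,h_1)$ and $(\alpha_2,h_2)$ both satisfy~\eqref{Eq: 2.10} and~\eqref{Eq: 2.11}, then so does their bracket. Rather than verifying the two constraint equations directly on the unwieldy components of~\eqref{Eq: 2.12}, I would argue conceptually via Theorem~\ref{Th: 2.6}: the pair $(\alpha_i,h_i)$ satisfies~\eqref{Eq: 2.10}--\eqref{Eq: 2.11} exactly when $X_{(\alpha_i,h_i)}$ is an infinitesimal symmetry of $E$, i.e. $[X_{(\alpha_i,h_i)},E]=0$. Since infinitesimal symmetries of any tensor field form a Lie subalgebra of $(\M V^1(\f M);[,])$ under the Jacobi identity, the vector field $[X_{(\alpha_1,h_1)},X_{(\alpha_2,h_2)}] = X_{\db[(\alpha_1,h_1);(\alpha_2,h_2)\db]}$ is again a symmetry of $E$. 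Reading off its $(\alpha,h)$-components via the splitting~\eqref{Eq: 1.4} and invoking Theorem~\ref{Th: 2.6} once more shows the resulting pair satisfies~\eqref{Eq: 2.10}--\eqref{Eq: 2.11}, which is precisely closure. The Jacobi identity for $\db[,\db]$ then descends from that of the vector-field bracket, so the restriction is a Lie algebra. This conceptual route sidesteps a direct and error-prone verification on the explicit formula.
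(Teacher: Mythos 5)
Your first two steps reproduce, with full detail, what the paper's one\--line proof asserts: the lemma ``follows from \eqref{Eq: 2.4}, \eqref{Eq: 2.10} and \eqref{Eq: 2.11}''. The substitution of \eqref{Eq: 2.11} into the $E$-component and of \eqref{Eq: 2.10} (under $\Sha$) into the $\im(\Lambda\Sha)$-component is exactly right, and your Cartan-formula passage to the second form of \eqref{Eq: 2.12} is sign-correct: from $i_{\alpha_2\Sha}\alpha_1 = \Lambda(\alpha_2,\alpha_1)$ one gets $i_{\alpha_2\Sha}d\alpha_1 = L_{\alpha_2\Sha}\alpha_1 + d\Lambda(\alpha_1,\alpha_2)$ and $i_{\alpha_1\Sha}d\alpha_2 = L_{\alpha_1\Sha}\alpha_2 - d\Lambda(\alpha_1,\alpha_2)$, while $L_{\alpha_i\Sha}\omega = i_{\alpha_i\Sha}d\omega$ because $i_{\alpha_i\Sha}\omega = 0$; the three $d\Lambda$ contributions indeed collapse to the single $-d\Lambda(\alpha_1,\alpha_2)$. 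Your closure argument via Theorem \ref{Th: 2.6} is also sound: if $(\beta,k)$ denotes the bracket pair, then $X_{(\beta,k)} = [X_{(\alpha_1,h_1)},X_{(\alpha_2,h_2)}]$ commutes with $E$, and the computation in the proof of Theorem \ref{Th: 2.6}, applied to the pair $(\beta,k)$ together with the directness of the splitting $T\f M = \im(\Lambda\Sha)\oplus\langle E\rangle$, forces both \eqref{Eq: 2.10} and \eqref{Eq: 2.11} for $(\beta,k)$. This matches the paper's implicit mechanism (generators of infinitesimal symmetries of any tensor field form a Lie subalgebra).

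The one step that fails as stated is your final claim that the Jacobi identity for $\db[,\db]$ ``descends from that of the vector-field bracket''. It does not, because the correspondence $(\alpha,h)\mapsto X_{(\alpha,h)}$ is not injective: its kernel is the sheaf of pairs $(f\omega,0)$, $f\in C^\infty(\f M)$, since $\Lambda\Sha(\omega)=0$; moreover these kernel pairs satisfy \eqref{Eq: 2.10} and \eqref{Eq: 2.11} (using $i_\omega\Lambda = 0$), so they lie inside your subsheaf. Hence from the vanishing of the vector field associated to the cyclic Jacobi sum $J$ of three pairs you can conclude only that $J$ is of the form $(f\omega,0)$, not that $J=0$. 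The repair is immediate and keeps your architecture intact: the paper has already stated, right after \eqref{Eq: 2.4} (following \cite{JanMod09}), that $\db[,\db]$ is a Lie bracket on all of $\Omega^1(\f M)\times C^\infty(\f M)$; combined with your closure argument, the restriction to the subsheaf of pairs satisfying \eqref{Eq: 2.10} and \eqref{Eq: 2.11} is then automatically a Lie subalgebra, which is the assertion of the lemma.
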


\begin{proof}
It follows from \eqref{Eq: 2.4}, \eqref{Eq: 2.10}  and \eqref{Eq: 2.11}.
\end{proof}

\subsection{Infinitesimal symmetries of $\Lambda$}

\begin{thm}\label{Th: 2.8}
A vector field $ X $ on $ \f M $ is an infinitesimal symmetry of $ \Lambda $, i.e. $ L_X \Lambda = [X,\Lambda] = 0 $, if and only if $ X = X_{(\alpha,h)}$, where
$\alpha$ and $h$ satisfy the following conditions
\begin{equation}\label{Eq: 2.13}
[\alpha\Sha,\Lambda] - E \wedge (dh + h\, L_E\omega)\Sha = 0\,.
\end{equation}
\end{thm}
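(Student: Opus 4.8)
The plan is to compute $L_X\Lambda = [X_{(\alpha,h)},\Lambda]$ using the splitting $X_{(\alpha,h)} = \alpha\Sha + h\,E$ and the bilinearity of the Schouten--Nijenhuis bracket over $\B R$, so that
\[
[X_{(\alpha,h)},\Lambda] = [\alpha\Sha,\Lambda] + [h\,E,\Lambda].
\]
The first summand is left as is, since it appears literally in the claimed condition \eqref{Eq: 2.13}. The entire content of the theorem therefore reduces to showing that the second summand equals $-\,E\wed(dh + h\,L_E\omega)\Sha$. Since $L_X E=[X,E]=0$ precisely when $X\in\mathcal L(E)$ is already understood from Theorem~\ref{Th: 2.6}, the genuinely new computation is confined to unwinding $[h\,E,\Lambda]$.

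First I would expand $[h\,E,\Lambda]$ by the Leibniz-type identity for the Schouten--Nijenhuis bracket. Using $h\,E = h\wed E$ (a product of a function and a vector field) together with the product rule
\[
[P,Q\wed R] = [P,Q]\wed R + (-1)^{pq+q}\,Q\wed[P,R]
\]
and its graded-symmetric counterpart, one obtains a term proportional to $[E,\Lambda]$ multiplied by $h$, plus a term in which the derivative falls on the function $h$, producing a wedge with $dh\Sha$ against $E$. The structural equation \eqref{Eq: 1.3}, namely $[E,\Lambda] = -\,E\wed\Lam\Sha(L_E\omega) = -\,E\wed(L_E\omega)\Sha$, is the key input here: it converts the $h\,[E,\Lambda]$ contribution into $-\,h\,E\wed(L_E\omega)\Sha$. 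Collecting the function-derivative term with this, the $E$-component factors out and I expect to land exactly on $-\,E\wed(dh + h\,L_E\omega)\Sha$, which is the assertion. The equivalence then reads: $L_X\Lambda=0$ iff $[\alpha\Sha,\Lambda]$ cancels precisely this $E$-wedge term, i.e.\ iff \eqref{Eq: 2.13} holds.

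The step I expect to be the main obstacle is the careful bookkeeping of signs and of where each differentiation lands when applying the graded Leibniz rule to $[h\,E,\Lambda]$, because $h$ is a $0$-vector, $E$ a $1$-vector, and $\Lambda$ a $2$-vector, so the parities $(-1)^{pq+q}$ must be tracked and one must correctly identify the piece $[E,\Lambda]$ (to which \eqref{Eq: 1.3} applies) versus the Hamiltonian-type term carrying $dh$. A secondary subtlety is verifying that the term $dh\Sha$ wedged with $E$ arises with the sign and placement shown, i.e.\ that the contraction $\Lambda\Sha(dh)$ reassembles into $(dh)\Sha$ inside the $E$-wedge; this is where the identity $[X,P]=L_X P$ for vector fields, combined with the definition $\beta\Sha = \Lambda\Sha(\beta)$, must be invoked. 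Once the two contributions are assembled and the common factor $E\wed(\,\cdot\,)\Sha$ is recognized, the proof concludes by the uniqueness of the decomposition $X=X_{(\alpha,h)}$ and the observation that $L_X\Lambda=0$ is equivalent to the vanishing of the single expression in \eqref{Eq: 2.13}.
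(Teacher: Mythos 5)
Your proposal is correct and follows essentially the same route as the paper: split $L_{X_{(\alpha,h)}}\Lambda = [\alpha\Sha,\Lambda] + [h\,E,\Lambda]$, then reduce $[h\,E,\Lambda]$ via the Leibniz rule to $h\,[E,\Lambda] - E\wed dh\Sha$ and apply the structural equation \eqref{Eq: 1.3} to obtain $-\,E\wed(dh + h\,L_E\omega)\Sha$. The only cosmetic difference is your passing reference to Theorem~\ref{Th: 2.6}, which is not actually needed here; the sign bookkeeping you flag as the main obstacle is exactly the content of the paper's one-line identity $[h\,E,\Lambda] = h\,[E,\Lambda] - E\wed dh\Sha$.
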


\begin{proof}
We have
$$
L_{X_{(\alpha,h)}} \Lambda = [\alpha\Sha,\Lambda] + [h\, E,\Lambda]\,. 
$$
Theorem \ref{Th: 2.8} follows from
$$
[h\, E,\Lambda] = h \, [E,\Lambda] - E\wedge dh\Sha = - E\wedge (dh + h\, L_E\omega)\Sha\,.
$$
\vglue-1.3\baselineskip
\end{proof}

\begin{lem}\label{Lm: 2.9}
A vector field $ X_{(\alpha,h)} $ is an infinitesimal symmetry of $ \Lambda $
if and only if the following conditions
\begin{align}\label{Eq: 2.14}
d\omega(\alpha\Sha,\beta\Sha) + h \, d\omega(E,\beta\Sha) + dh(\beta\Sha)  
& = 
0\,,
\\ \label{Eq: 2.15}
\alpha(E)\, d\omega(\beta\Sha,\gamma\Sha) - d\alpha(\beta\Sha,\gamma\Sha)
& =
0\,
\end{align}
are satisfied for any 1-forms $ \beta, \gamma$.
\end{lem}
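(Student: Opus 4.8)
The plan is to start from the single bivector equation of Theorem~\ref{Th: 2.8}, namely $P \byd [\alpha\Sha,\Lambda] - E\wed(dh + h\,L_E\omega)\Sha = 0$, and to split it into its two scalar components relative to the decomposition \eqref{Eq: 1.4} of $T\f M$. Since a $2$-vector vanishes precisely when all of its contractions $i_\mu P$ with $1$-forms $\mu$ vanish, and since each $i_\mu P$ is a vector field that decomposes uniquely into a $\Ker(\omega)$-part of the form $(\cdots)\Sha$ plus a multiple of $E$, the equation $P=0$ is equivalent to the simultaneous vanishing, for every $1$-form $\mu$, of these two parts. I expect the two resulting families of scalar equations to be exactly \eqref{Eq: 2.14} and \eqref{Eq: 2.15}.

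First I would establish the contraction identity $i_\mu[\alpha\Sha,\Lambda] = [\alpha\Sha,\mu\Sha] - (L_{\alpha\Sha}\mu)\Sha$, which follows from the derivation property $L_{\alpha\Sha}(i_\mu\Lambda) = i_{L_{\alpha\Sha}\mu}\Lambda + i_\mu[\alpha\Sha,\Lambda]$ together with $i_\mu\Lambda = \mu\Sha$ and $L_{\alpha\Sha}(\mu\Sha) = [\alpha\Sha,\mu\Sha]$. Substituting the explicit bracket \eqref{Eq: 2.3} for $[\alpha\Sha,\mu\Sha]$ and Cartan's formula $L_{\alpha\Sha}\mu = i_{\alpha\Sha}d\mu + d(\mu(\alpha\Sha))$, the terms $i_{\alpha\Sha}d\mu$ cancel, and---using $\mu(\alpha\Sha)=\Lambda(\alpha,\mu)$---the terms $d\Lambda(\alpha,\mu)$ cancel as well. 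This leaves the clean expression $i_\mu[\alpha\Sha,\Lambda] = \big(-i_{\mu\Sha}d\alpha + \alpha(E)\,i_{\mu\Sha}d\omega - \mu(E)\,i_{\alpha\Sha}d\omega\big)\Sha - d\omega(\alpha\Sha,\mu\Sha)\,E$.

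Next I would add the contraction of the second term, $i_\mu(E\wed W) = \mu(E)\,W - \mu(W)\,E$ with $W=(dh+h\,L_E\omega)\Sha$, and read off the two components of $i_\mu P$. For the $E$-component I would use $L_E\omega = i_E d\omega$ and the identity $\mu(W) = \Lambda(dh+h\,L_E\omega,\mu) = -dh(\mu\Sha)-h\,d\omega(E,\mu\Sha)$ to obtain, up to sign, exactly the left-hand side of \eqref{Eq: 2.14}; thus the $E$-component of $i_\mu P$ vanishes for all $\mu$ iff \eqref{Eq: 2.14} holds. For the $\Ker(\omega)$-component, written $\theta\Sha$, I would note that $\theta\Sha=0$ is equivalent to $\theta(\gamma\Sha)=0$ for all $\gamma$ (because $\Ker(\Lambda\Sha)=\langle\omega\rangle$ and $\im(\Lambda\Sha)=\Ker(\omega)$). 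Evaluating $\theta$ on $\gamma\Sha$ produces $-d\alpha(\mu\Sha,\gamma\Sha)+\alpha(E)\,d\omega(\mu\Sha,\gamma\Sha)$ together with a term $-\mu(E)\big(d\omega(\alpha\Sha,\gamma\Sha)+dh(\gamma\Sha)+h\,d\omega(E,\gamma\Sha)\big)$; the latter is a multiple of the left-hand side of \eqref{Eq: 2.14}, so under \eqref{Eq: 2.14} it drops out and the vanishing of what remains is precisely \eqref{Eq: 2.15}.

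Assembling the two directions then finishes the argument: $P=0$ forces \eqref{Eq: 2.14} (from the $E$-component) and then \eqref{Eq: 2.15} (from the $\Ker(\omega)$-component, after using \eqref{Eq: 2.14}); conversely \eqref{Eq: 2.14} kills both the $E$-component and the mixed $\mu(E)$-term, while \eqref{Eq: 2.15} kills the remainder, so both components of $i_\mu P$ vanish for every $\mu$ and hence $P=0$. The only delicate point I anticipate is the bookkeeping of sign conventions for $\Lambda\Sha$, for the interior products, and for the pairings $\mu(\alpha\Sha)=\Lambda(\alpha,\mu)$, so that the cancellations in the contraction identity and the identification with \eqref{Eq: 2.14}--\eqref{Eq: 2.15} come out correctly; the geometric content is merely the component decomposition of the single equation of Theorem~\ref{Th: 2.8}.
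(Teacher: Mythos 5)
Your proof is correct, but it is organized differently from the paper's. The paper proves the lemma by directly evaluating the 2-vector field $P=[\alpha\Sha,\Lambda]-E\wed(dh+h\,L_E\omega)\Sha$ on the pairs $(\omega,\beta)$ and $(\beta,\gamma)$ with $\beta,\gamma$ \emph{closed} 1-forms (closedness simplifies the Schouten-bracket contractions); in the second evaluation a cyclic sum $\Lambda(\alpha,d\Lambda(\beta,\gamma))+\Lambda(\beta,d\Lambda(\gamma,\alpha))+\Lambda(\gamma,d\Lambda(\alpha,\beta))$ appears, which the paper identifies with $\tfrac12 i_{[\Lambda,\Lambda]}(\alpha\wed\beta\wed\gamma)$ and eliminates via the structure equation \eqref{Eq: 1.3}. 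You instead contract $P$ with a single 1-form $\mu$, compute $i_\mu[\alpha\Sha,\Lambda]=[\alpha\Sha,\mu\Sha]-(L_{\alpha\Sha}\mu)\Sha$ from the derivation property of the Lie derivative, and then substitute the known bracket \eqref{Eq: 2.3}; splitting the resulting vector field by $T\f M=\im(\Lambda\Sha)\oplus\langle E\rangle$ yields \eqref{Eq: 2.14} as the $E$-component and \eqref{Eq: 2.15} as the $\Ker(\omega)$-component (after discarding the $\mu(E)$-multiple of \eqref{Eq: 2.14}), and the converse follows since the two components of $i_\mu P$ vanishing for all $\mu$ forces $P=0$. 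I verified the contraction identity, the cancellation of the $d\Lambda(\alpha,\mu)$ and $i_{\alpha\Sha}d\mu$ terms, and both component computations: they are right, with the signs consistent under the paper's convention $\mu(\alpha\Sha)=\Lambda(\alpha,\mu)$. What your route buys: it never invokes the Schouten bracket $[\Lambda,\Lambda]$ or the structure equation \eqref{Eq: 1.3}, and it needs no restriction to closed test forms, since all the structural information enters through \eqref{Eq: 2.3}, which is already established; the two conditions of the lemma also emerge more transparently as the two components of a single vector field. What the paper's route buys: it is a self-contained Schouten-calculus computation on the bivector itself, parallel in style to the proof of Theorem \ref{Th: 2.16}, and it makes visible exactly where the almost-coPoisson-Jacobi compatibility condition on $[\Lambda,\Lambda]$ is used.
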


\begin{proof}
It is sufficient to evaluate the 2-vector field on the left hand side of \eqref{Eq: 2.13} on $\ome, \beta$ and $\beta,\gamma$, where $\beta, \gamma$ are closed 1-forms. We get
$$
i_{[\alpha\Sha,\Lambda] - E \wedge (dh + h\, L_E\omega)\Sha}(\omega \wedge \beta) = - \Lambda(i_{\alpha\Sha}d\omega + h\, L_E\omega + dh,\beta)
$$
which vanishes if and only if \eqref{Eq: 2.14} is satisfied.

On the other hand
\begin{align*}
&
i_{[\alpha\Sha,\Lambda] 
 - 
E \wedge (dh + h\, L_E\omega)\Sha} 
(\beta \wedge \gamma) = 
\\
& \qquad\qquad =
\Lambda(\alpha,d\Lambda(\beta,\gamma)) 
+ \Lambda(\beta,d\Lambda(\gamma,\alpha))
+ \Lambda(\gamma,d\Lambda(\alpha,\beta)) 
\\
& \qquad\qquad\quad
- \beta(E)\,\Lambda(h\, L_E\omega + dh,\gamma)
+ \gamma(E)\,\Lambda(h\, L_E\omega + dh,\beta)
\end{align*}
which, by using \eqref{Eq: 2.14}, can be rewritten as
\begin{align*}
&
i_{[\alpha\Sha,\Lambda] 
 - 
E \wedge (dh + h\, L_E\omega)\Sha} 
(\beta \wedge \gamma) = - \tfrac 12 i_{[\Lambda,\Lambda]} (\alpha\wedge\beta\wedge\gamma)
+
d\alpha(\beta\Sha,\gamma\Sha)
\\
& \qquad\quad
+ \beta(E)\,\Lambda(i_{\alpha\Sha}d\omega,\gamma)
- \gamma(E)\,\Lambda(i_{\alpha\Sha}d\omega,\beta)
\\
& \qquad =
-  i_{E\wedge (\Lambda\Sha\otimes\Lambda\Sha)(d\omega)} (\alpha\wedge\beta\wedge\gamma)
+
d\alpha(\beta\Sha,\gamma\Sha)
\\
& \qquad\quad+ \beta(E)\,\Lambda(i_{\alpha\Sha}d\omega,\gamma)
- \gamma(E)\,\Lambda(i_{\alpha\Sha}d\omega,\beta)
\\
& \qquad =
- \alpha(E)\, d\omega(\beta\Sha,\gamma\Sha) + d\alpha(\beta\Sha,\gamma\Sha)
\end{align*}
which vanishes if and only if \eqref{Eq: 2.15} is satisfied.

On the other hand if \eqref{Eq: 2.14} and \eqref{Eq: 2.15} are satisfied, then the 2-vector field $ [\alpha\Sha,\Lambda] 
 - 
E \wedge (dh + h\, L_E\omega)\Sha $ is the zero 2-vector field.
\end{proof}

\subsection{Infinitesimal symmetries of the almost-cosymplectic-con\-tact structure
and the dual almost-coPoisson-Jacobi structure}

An \emph{infinitesimal symmetry  of the almost-cosymplectic-contact structure} $(\ome,\Ome)$ is a vector
field $ X $ on $\f M $ such that $L_X\omega = 0$ and $L_X\Omega = 0$. On the other hand  an \emph{infinitesimal symmetry  of the almost-coPoisson-Jacobi structure} $(E,\Lambda)$ is a vector
field $ X $ on $\f M $ such that $L_X E = [X,E] = 0$ and $L_X\Omega = [X,\Lambda] = 0$.

\begin{thm}\label{Th: 2.10}
A vector field $ X $ is an infinitesimal symmetry of the almost-cosymplectic-contact structure $(\ome,\Ome)$
if and only if
$
X = X_{(\alpha,h)}\,,
$
where $\alpha \in \Ker_{cl}(E)$ and the condition \eqref{Eq: 2.5} is satisfied.
\end{thm}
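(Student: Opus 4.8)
The plan is to simply conjoin Theorems \ref{Th: 2.2} and \ref{Th: 2.5}, since by definition an infinitesimal symmetry of $(\omega,\Omega)$ is a vector field $X$ on $\f M$ satisfying both $L_X\omega = 0$ and $L_X\Omega = 0$. Each of these two conditions has already been characterized individually, so the task reduces to checking that the two characterizations can be imposed on one and the same representative of $X$ in the splitting \eqref{Eq: 1.4}.

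First I would treat the forward direction. Assume $X$ is an infinitesimal symmetry of $(\omega,\Omega)$. From $L_X\Omega = 0$, Theorem \ref{Th: 2.5} allows me to write $X = X_{(\alpha,h)}$ with $\alpha\in\Ker_{cl}(E)$, i.e.\ $d\alpha = 0$ and $\alpha(E)=0$. Fixing this particular representative $(\alpha,h)$, the remaining hypothesis $L_X\omega=0$ then forces condition \eqref{Eq: 2.5} for $(\alpha,h)$ by Theorem \ref{Th: 2.2}. Conversely, if $X = X_{(\alpha,h)}$ with $\alpha\in\Ker_{cl}(E)$ and \eqref{Eq: 2.5} holds, then $\alpha\in\Ker_{cl}(E)$ yields $L_X\Omega = 0$ via Theorem \ref{Th: 2.5}, while \eqref{Eq: 2.5} yields $L_X\omega = 0$ via Theorem \ref{Th: 2.2}; hence $X$ is an infinitesimal symmetry of the full structure.

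The only point that needs care, and hence the main (albeit mild) obstacle, is the non-uniqueness of the splitting \eqref{Eq: 1.4}: the $1$-form $\alpha$ representing a given $X$ is determined only up to a multiple of $\omega$. One must verify that the representative singled out in the $\Omega$-step (the canonical one lying in $\Ker_{cl}(E)$) is compatible with the condition coming from $\omega$. This is immediate once I observe that \eqref{Eq: 2.5} depends on $\alpha$ only through $\alpha\Sha = \Lambda\Sha(\alpha)$, and that $\omega\Sha = \Lambda\Sha(\omega) = 0$ by $i_\omega\Lambda = 0$ in \eqref{Eq: 1.1}; thus \eqref{Eq: 2.5} is invariant under changing the representative, and no genuine obstruction arises in combining the two theorems.
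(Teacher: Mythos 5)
Your proof is correct and takes essentially the same route as the paper, whose entire proof of Theorem \ref{Th: 2.10} is the one-line remark that it follows from Theorems \ref{Th: 2.2} and \ref{Th: 2.5}. Your extra verification that condition \eqref{Eq: 2.5} depends on $\alpha$ only through $\alpha\Sha$ (using $\Lambda\Sha(\omega)=0$ from \eqref{Eq: 1.1}), so that the representative produced by the $\Omega$-step is compatible with the $\omega$-step, addresses a point the paper leaves implicit, and you handle it correctly.
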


\begin{proof}
It follows from Theorems \ref{Th: 2.2} and \ref{Th: 2.5}. 
\end{proof}

\begin{lem}\label{Lm: 2.11}
A vector field $X_{(\alpha,h)}$ is an infinitesimal symmetry of $(\omega,\Omega)$ if and only if
the following conditions are satisfied 
\begin{enumerate}
\item  $\alpha\in\ker_{cl}(\f E)$, i.e. $d\alpha = 0$\,, $\alp(E) = 0$\,, 
\item  $i_E dh + i_E i_{\alpha\Sha}d\omega
= E{\tecka}h + \Lambda(L_E\ome,\alpha) = 0$\,,
\item $ d\omega(\alpha\Sha,\beta\Sha) + h \, d\omega(E,\beta\Sha) + dh(\beta\Sha) = 0$ for any 1-form $ \beta $\,.
\end{enumerate}
\end{lem}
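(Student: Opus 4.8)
The plan is to reduce Lemma~\ref{Lm: 2.11} to the conjunction of the two characterizations already established, namely Theorem~\ref{Th: 2.2} (equivalently Lemma~\ref{Lm: 2.3}) for $L_X\omega = 0$ and Theorem~\ref{Th: 2.5} for $L_X\Omega = 0$. By definition, $X_{(\alpha,h)}$ is an infinitesimal symmetry of the almost-cosymplectic-contact structure $(\omega,\Omega)$ precisely when both $L_X\omega = 0$ and $L_X\Omega = 0$ hold, so I would simply assemble the conditions coming from each factor.

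First I would invoke Theorem~\ref{Th: 2.5}: the condition $L_X\Omega = 0$ is equivalent to $\alpha \in \Ker_{cl}(E)$, that is, $d\alpha = 0$ and $\alpha(E) = 0$. This yields condition (1). Next I would invoke Lemma~\ref{Lm: 2.3}, which shows that $L_X\omega = 0$ is equivalent to the pair of conditions listed there: the scalar equation $E{\tecka}h + \Lambda(L_E\omega,\alpha) = 0$ (obtained by evaluating \eqref{Eq: 2.5} on $E$) and the equation $d\omega(\alpha\Sha,\beta\Sha) + h\,d\omega(E,\beta\Sha) + dh(\beta\Sha) = 0$ for all 1-forms $\beta$ (obtained by evaluating \eqref{Eq: 2.5} on $\beta\Sha$). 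These give conditions (2) and (3) respectively. Since the splitting $T\f M = \im(\Lambda\Sha)\oplus\langle E\rangle$ guarantees that a 1-form vanishing on $E$ and on every $\beta\Sha$ is zero, conditions (2) and (3) together are genuinely equivalent to \eqref{Eq: 2.5}, not merely implied by it.

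The only subtlety worth flagging is that conditions (2) and (3) as stated in Lemma~\ref{Lm: 2.3} are the decomposition of \eqref{Eq: 2.5} without using $\alpha(E) = 0$; since condition (1) already forces $\alpha(E) = 0$, one could optionally simplify the term $\Lambda(L_E\omega,\alpha)$, but I would keep the statement in its given form to match Lemma~\ref{Lm: 2.3} verbatim. There is no real obstacle here: the result is a direct logical conjunction, and the proof is essentially the observation that Theorem~\ref{Th: 2.10} combined with the explicit unpacking of \eqref{Eq: 2.5} in Lemma~\ref{Lm: 2.3} gives exactly the three listed conditions. Thus the proof reduces to the single line: the claim follows from Theorem~\ref{Th: 2.5} for condition (1) and from Lemma~\ref{Lm: 2.3} for conditions (2) and (3), together with Theorem~\ref{Th: 2.10}.
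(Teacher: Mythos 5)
Your proposal is correct and follows essentially the same route as the paper: the paper's own proof simply cites Theorem~\ref{Th: 2.10} (which is itself the conjunction of Theorems~\ref{Th: 2.2} and~\ref{Th: 2.5}) together with Lemma~\ref{Lm: 2.3}, exactly the assembly you describe. Your additional remark that conditions (2) and (3) are genuinely equivalent to \eqref{Eq: 2.5} via the splitting $T\f M = \im(\Lambda\Sha)\oplus\langle E\rangle$ is precisely the content already secured in Lemma~\ref{Lm: 2.3}, so nothing is missing.
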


\begin{proof}
It is a consequence of Theorem \ref{Th: 2.10} and Lemma \ref{Lm: 2.3}.
\end{proof}

The bracket \eqref{Eq: 2.4} restricted for generators of infinitesimal symmetries of $(\omega,\Omega)$ gives the bracket
\begin{align}\nonumber
\db[(
&
\alp_{1},h_{1})   ;(\alp_{2},h_{2})\db] =
\\ \label{Eq: 2.16}
& 
 = \big(
d\Lambda(\alpha_{1},\alpha_{2});\alpha_{1}\Sha{\tecka}h_{2} - \alpha_{2}\Sha{\tecka}h_{1} 
- d\omega(\alpha_{1}\Sha,\alpha_{2}\Sha)
\big)
\\ 
& \nonumber
 =
\big(
d\Lambda(\alpha_{1},\alpha_{2});
d\omega(\alpha_{1}\Sha,\alpha_{2}\Sha)
+ h_2 \, \Lambda(L_E\omega,\alpha_{1}) 
- h_1 \, \Lambda(L_E\omega,\alpha_{2}) \big)
\\ 
& \nonumber
 =
\big(
d\Lambda(\alpha_{1},\alpha_{2});
d\omega(\alpha_{1}\Sha,\alpha_{2}\Sha)
+ h_1 \, E\tecka h_{2}
- h_2 \, E\tecka h_{1}
\big)\,
\end{align}
which defines the Lie algebra structure on the subsheaf of $\Ker_{cl}(E)\times C^\infty(\f M)$ given by pairs satisfying the condition \eqref{Eq: 2.5}.  We shall denote the Lie algebra of generators of infinitesimal symmetries of $(\omega,\Omega)$ by
$(\Gis(\omega,\Omega); \db[,\db])$.

\begin{crl}\label{Cr: 2.12}
An infinitesimal symmetry
of the cosymplectic  structure $(\omega,\Omega)$ is a vector field
$
X_{(\alpha,h)}\,,
$
where $ \alpha \in \Ker_{cl}(E) $ and $h$ is a constant. 

Then the bracket \eqref{Eq: 2.4} is reduced to
\begin{equation*}
\db[(\alpha_{1},h_{1});(\alpha_{2},h_{2})\db]
=
\big(
d\Lambda(\alpha_{1},\alpha_{2}); 0
\big)\,.
\end{equation*}
I.e. we obtain the subalgebra $ (\Ker_{cl}(\f E)\times \mathbb{R}, \db[,\db]) $ of generators of infinitesimal symmetries of the cosymplectic structure. 
\end{crl}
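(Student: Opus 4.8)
The plan is to obtain the statement as a direct specialisation of Theorem~\ref{Th: 2.10} and of the restricted bracket \eqref{Eq: 2.16} to the cosymplectic case. Recall from Remark~\ref{Rm: 1.1} that a cosymplectic pair is precisely an almost-cosymplectic-contact pair carrying the extra condition $d\ome = 0$ (the dual coPoisson pair then satisfying $[E,\Lam] = 0$ and $[\Lam,\Lam] = 0$). Hence every result already proved for $(\ome,\Ome)$ remains valid here, and I only need to track the simplifications forced by $d\ome = 0$.

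First I would invoke Theorem~\ref{Th: 2.10}: a vector field $X$ is an infinitesimal symmetry of $(\ome,\Ome)$ if and only if $X = X_{(\alp,h)}$ with $\alp\in\Ker_{cl}(E)$ and the pair $(\alp,h)$ obeying \eqref{Eq: 2.5}, namely $i_{\alp\Sha}d\ome + h\,i_E d\ome + dh = 0$. Imposing $d\ome = 0$ annihilates the first two summands, so \eqref{Eq: 2.5} collapses to $dh = 0$. Thus $h$ is locally constant (constant on each connected component of $\f M$), which yields the asserted characterisation: the generators are exactly the pairs $(\alp,h)\in\Ker_{cl}(E)\car\B R$.

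Next I would specialise the restricted bracket \eqref{Eq: 2.16}. In its last form the second slot reads $d\ome(\alp_1\Sha,\alp_2\Sha) + h_1\,E\tecka h_2 - h_2\,E\tecka h_1$. With $d\ome = 0$ the first summand vanishes, and with $h_1,h_2$ constant the derivatives $E\tecka h_i$ vanish as well, so the entire second slot is zero. Therefore $\db[(\alp_1,h_1);(\alp_2,h_2)\db] = \big(d\Lam(\alp_1,\alp_2);0\big)$, exactly as claimed.

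Finally I would confirm that this bracket genuinely defines a Lie subalgebra on $\Ker_{cl}(E)\car\B R$. The second component $0\in\B R$ is trivially stable under the bracket, so the one point to verify is that $d\Lam(\alp_1,\alp_2)\in\Ker_{cl}(E)$. Closedness is immediate, since $d\Lam(\alp_1,\alp_2)$ is exact. Vanishing on $E$ is precisely the computation carried out after \eqref{Eq: 2.10a}, showing $i_E d\Lam(\alp_1,\alp_2) = i_{[E,\Lam]}(\alp_1\wed\alp_2)$; in the cosymplectic case this is even more transparent because $[E,\Lam]=0$, so the expression vanishes outright. Since closure holds and the bracket is inherited from $(\Gis(\ome,\Ome);\db[,\db])$ (itself a Lie algebra by Lemma~\ref{Lm: 2.11} and the surrounding discussion), skew-symmetry and the Jacobi identity carry over for free, and $(\Ker_{cl}(E)\car\B R,\db[,\db])$ is the desired subalgebra. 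The computation is routine throughout; the only step needing genuine care is the $\Ker_{cl}(E)$-closure of $d\Lam(\alp_1,\alp_2)$, and that is already supplied by the general argument.
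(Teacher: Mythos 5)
Your proposal is correct and follows essentially the same route as the paper: the paper's proof is precisely the observation that $d\ome=0$ collapses \eqref{Eq: 2.5} to $dh=0$, with the bracket reduction left implicit as a specialisation of \eqref{Eq: 2.16}. Your additional verification that $d\Lam(\alp_1,\alp_2)\in\Ker_{cl}(E)$ (via $[E,\Lam]=0$ for the coPoisson pair) and your remark that $dh=0$ only gives local constancy are careful refinements of points the paper glosses over, not deviations from its argument.
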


\begin{proof}
For the cosymplectic structure we have $d\omega=0$ and \eqref{Eq: 2.5} reduces to
$
dh = 0\,.
$
\end{proof}

\begin{crl}\label{Cr: 2.13}
Any infinitesimal symmetry
of the contact  structure $(\omega,\Omega)$ is
 of local type
\begin{equation}\label{Eq: 2.17}
X_{(dh,-h)} =  dh\Sha - h\,E\,,
\end{equation} 
where $ E{\tecka}h = 0 $. I.e., infinitesimal symmetries of the contact structure are Hamilton-Jacobi lifts of functions satisfying $E{\tecka}h = 0$. 

Then the bracket \eqref{Eq: 2.4} is reduced to
\begin{align*}
\db[(dh_{1},-h_{1});(dh_{2},-h_{2})\db]
& =
\big(
d\{h_{1},h_{2}\}, - \{h_{1},h_{2}\}
\big)
\,.
\end{align*}
I.e., the subalgebra of generators of infinitesimal symmetries of the contact structure is identified with the Lie algebra
$
(C^\infty_{\f E}(\f M),\{,\})\, ,
$
where $ C^\infty_{\f E}(\f M) $ is the sheaf of functions $h$ such that $E{\tecka}h = 0$ and $\{,\}$ is the Poisson bracket. 
\end{crl}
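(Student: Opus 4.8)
The plan is to specialise the description of $\Gis(\omega,\Omega)$ from Theorem~\ref{Th: 2.10} and Lemma~\ref{Lm: 2.11} to the contact case $\Omega = d\omega$ of Remark~\ref{Rm: 1.1}. First I would take a generator $(\alpha,h)$ of an infinitesimal symmetry of $(\omega,\Omega)$, so $\alpha\in\Ker_{cl}(E)$ and \eqref{Eq: 2.5} holds, and simplify \eqref{Eq: 2.5} using $d\omega=\Omega$. Since $\alpha(E)=0$, the identity $\Omega\Fla\com\Lambda\Sha = q_1$ (the projection $\beta\mapsto\beta-\beta(E)\,\omega$) gives $i_{\alpha\Sha}d\omega = i_{\alpha\Sha}\Omega = \alpha-\alpha(E)\,\omega = \alpha$, while $i_E\,d\omega = i_E\Omega = 0$. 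Hence \eqref{Eq: 2.5} collapses to $\alpha+dh=0$, i.e. $\alpha=-dh$. Parametrising by the opposite of the function component, every such symmetry is $X_{(dh,-h)} = dh\Sha - h\,E$, and $\alpha(E)=0$ becomes $dh(E) = E{\tecka}h = 0$; the word \emph{local} simply records that a form in $\Ker_{cl}(E)$ is only locally of the shape $dh$. Conversely, a direct check shows that each pair $(dh,-h)$ with $E{\tecka}h=0$ satisfies \eqref{Eq: 2.5}.

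Next I would evaluate the reduced bracket \eqref{Eq: 2.16} on two generators $(dh_1,-h_1)$ and $(dh_2,-h_2)$. Its first slot is $d\Lambda(dh_1,dh_2)$, which is exactly $d\{h_1,h_2\}$ once one sets $\{h_1,h_2\}\byd\Lambda(dh_1,dh_2)$; this is the Poisson bracket to which the Jacobi bracket of the dual structure reduces, the extra Jacobi terms being killed by $E{\tecka}h_i=0$. In the last slot of \eqref{Eq: 2.16} the terms involving $E{\tecka}h_i$ vanish for the same reason, leaving $d\omega(dh_1\Sha,dh_2\Sha)=\Omega(dh_1\Sha,dh_2\Sha)$. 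Using $\Omega\Fla\com\Lambda\Sha=q_1$ together with $\omega(\beta\Sha)=\Lambda(\beta,\omega)=0$ (from $i_\omega\Lambda=0$), I would compute $\Omega(\alpha\Sha,\beta\Sha)=\alpha(\beta\Sha)=-\Lambda(\alpha,\beta)$ for $\alpha\in\Ker(E)$, so that this slot equals $-\{h_1,h_2\}$. Thus $\db[(dh_1,-h_1);(dh_2,-h_2)\db]=(d\{h_1,h_2\},-\{h_1,h_2\})$.

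Finally I would read off the identification: the assignment $h\mapsto(dh,-h)$ carries $C^\infty_{\f E}(\f M)$ bijectively onto the contact generators, and the bracket computation above shows that it intertwines $\{,\}$ with $\db[,\db]$, hence is a Lie algebra isomorphism. Closure, namely $E{\tecka}\{h_1,h_2\}=0$, is automatic because the resulting first slot $d\{h_1,h_2\}$ must again lie in $\Ker_{cl}(E)$, as \eqref{Eq: 2.16} already defines a Lie algebra structure on the generators. The hard part will be purely computational: pushing the contraction $i_{\alpha\Sha}d\omega$ and the pairing $d\omega(\alpha\Sha,\beta\Sha)$ correctly through the musical morphisms via $\Omega\Fla\com\Lambda\Sha=q_1$, $i_E\Omega=0$ and $i_\omega\Lambda=0$, and keeping the signs consistent so that both slots carry the expected sign of $\{h_1,h_2\}$.
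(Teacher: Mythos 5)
Your proposal is correct and takes essentially the same route as the paper: the paper's proof likewise reduces \eqref{Eq: 2.5} in the contact case ($d\omega=\Omega$, $i_E\Omega=0$, $i_{\alpha\Sha}\Omega=\alpha$ for $\alpha(E)=0$) to $\alpha=-dh$, and obtains $E{\tecka}h=0$ from $\alpha\in\Ker_{cl}(E)$. The paper leaves the bracket reduction implicit, so your explicit evaluation of \eqref{Eq: 2.16} using $\Omega(\beta\Sha,\gamma\Sha)=-\Lambda(\beta,\gamma)$ and the closure argument merely fills in details the paper omits.
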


\begin{proof}
For a contact structure we have $d\ome = \Omega$ and \eqref{Eq: 2.5} reduces to
$
i_{\alpha\Sha}  \Omega + dh  = \alpha + dh = 0\,,
$
i.e.
$
\alpha = - dh\,.
$ 
From $\alpha\in \Ker_{cl}(E)$ we get $E{\tecka}h = 0$.
\end{proof}

\begin{rmk}\label{Rm: 2.2}
{\rm
For cosymplectic and contact structures a constant multiple of the Reeb vector field is an infinitesimal symmetry of the structure. It is not true for the almost-cosymplectic-contact structure.
}
\end{rmk}

\begin{lem}\label{Lm: 2.14}
A vector field $X_{(\alpha,h)}$ is an infinitesimal symmetry of $(E,\Lambda)$ if and only if
the following conditions are satisfied 
\begin{enumerate}
\item  $(L_E\alpha)(\beta\Sha) - \alpha(E)\, (L_E\omega)(\beta\Sha)   = 0\,, $
\item  $E{\tecka}h  + \Lambda(L_E\omega,\alpha)  = 0\,,$
\item  $ d\omega(\alpha\Sha,\beta\Sha) + h \, d\omega(E,\beta\Sha) + dh(\beta\Sha)  
 = 
0\,, $
\item $ \alpha(E)\, d\omega(\beta\Sha,\gamma\Sha) - d\alpha(\beta\Sha,\gamma\Sha)
 =
0\, $
\end{enumerate}
 for any 1-forms $ \beta, \gamma$.
\end{lem}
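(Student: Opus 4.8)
The plan is to read off the statement directly from the definition given just above Theorem \ref{Th: 2.10}: a vector field $X$ is an infinitesimal symmetry of the almost-coPoisson-Jacobi structure $(E,\Lambda)$ precisely when it satisfies simultaneously $L_X E = [X,E] = 0$ and $L_X\Lambda = [X,\Lambda] = 0$. Since every vector field on $\f M$ has the form $X_{(\alpha,h)}$ by the splitting \eqref{Eq: 1.4}, the lemma is obtained by intersecting the two characterizations already established separately for these two conditions. This exactly parallels the way Lemma \ref{Lm: 2.11} assembles the symmetry conditions for $\omega$ and $\Omega$.

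First I would invoke Theorem \ref{Th: 2.6}: the equation $L_X E = 0$ holds if and only if both \eqref{Eq: 2.10} and \eqref{Eq: 2.11} hold. Here condition \eqref{Eq: 2.11}, namely $E{\tecka}h + \Lambda(L_E\omega,\alpha) = 0$, is literally item (2). For condition \eqref{Eq: 2.10} I would pass to the pointwise reformulation recorded in the Remark following Theorem \ref{Th: 2.6}, which rewrites the musical-image equation $(L_E\alpha - \alpha(E)\,L_E\omega)\Sha = 0$ as $(L_E\alpha)(\beta\Sha) - \alpha(E)\,(L_E\omega)(\beta\Sha) = 0$ for every 1-form $\beta$; this is item (1). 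Next I would invoke Lemma \ref{Lm: 2.9}: the equation $L_X\Lambda = 0$ holds if and only if both \eqref{Eq: 2.14} and \eqref{Eq: 2.15} hold for all 1-forms $\beta,\gamma$, which are exactly items (3) and (4).

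Combining the two, $X_{(\alpha,h)}$ is an infinitesimal symmetry of $(E,\Lambda)$ if and only if all four conditions (1)--(4) are satisfied, which is the claim. There is essentially no obstacle: the result is a bookkeeping combination of Theorem \ref{Th: 2.6} (together with its accompanying Remark) and Lemma \ref{Lm: 2.9}. The only point requiring minor care is to cite the equivalent evaluated form of \eqref{Eq: 2.10} from the Remark, so that condition (1) appears in the "tested against $\beta\Sha$" format displayed in the statement rather than in the musical-image form of Theorem \ref{Th: 2.6}.
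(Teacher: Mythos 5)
Your proposal is correct and follows essentially the same route as the paper, whose proof consists precisely of citing Theorem \ref{Th: 2.6} and Lemma \ref{Lm: 2.9} and intersecting the resulting conditions \eqref{Eq: 2.10}, \eqref{Eq: 2.11}, \eqref{Eq: 2.14}, \eqref{Eq: 2.15}. Your explicit appeal to the Remark after Theorem \ref{Th: 2.6} to convert \eqref{Eq: 2.10} into the evaluated form of item (1) is a detail the paper leaves implicit, but it is the same argument.
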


\begin{proof}
From Theorem \ref{Th: 2.6} and Lemma \ref{Lm: 2.9}
$X_{(\alpha,h)}$ is an infinitesimal symmetry of $(E,\Lambda)$ if and only if \eqref{Eq: 2.10}, \eqref{Eq: 2.11}, \eqref{Eq: 2.14} and \eqref{Eq: 2.15} are satisfied.  
\end{proof}

 We shall denote the Lie algebra of generators of infinitesimal symmetries of $(E,\Lambda)$ by
$(\Gis(E,\Lambda); \db[,\db])$.

\begin{rmk}\label{Rm: 2.3}
{\rm
We can describe also the Lie algebras of infinitesimal symmetries of other pairs of basic fields. Especially:

\smallskip
1. The Lie algebra  $(\Gis(E,\Omega); \db[,\db])$
is given by pairs satisfying
\begin{enumerate}
\item  $\alpha\in\ker_{cl}(\f E)$, i.e. $d\alpha = 0$\,, $\alp(E) = 0$\,, 
\item  $E{\tecka}h  + \Lambda(L_E\omega,\alpha)  = 0\,.$
\end{enumerate}

\smallskip
2. The Lie algebra  $(\Gis(\Lambda,\Omega); \db[,\db])$
is given by pairs satisfying
\begin{enumerate}
\item $\alpha\in\ker_{cl}(\f E)$, i.e. $d\alpha = 0$\,, $\alp(E) = 0$\,, 
\item  $ d\omega(\alpha\Sha,\beta\Sha) + h \, d\omega(E,\beta\Sha) + dh(\beta\Sha)  
 = 
0\, $ for any 1-form $\beta$.
\end{enumerate}

\smallskip
3. The Lie algebra  $(\Gis(E,\omega); \db[,\db])$
is given by pairs satisfying
\begin{enumerate}
\item  $(L_E\alpha)(\beta\Sha) - \alpha(E)\, (L_E\omega)(\beta\Sha)   = 0\, $ for any 1-form $\beta$,
\item  $E{\tecka}h  + \Lambda(L_E\omega,\alpha)  = 0\,,$
\item  $ d\omega(\alpha\Sha,\beta\Sha) + h \, d\omega(E,\beta\Sha) + dh(\beta\Sha)  
 = 
0\, $ for any 1-form $\beta$.
\end{enumerate}

\smallskip
4. The Lie algebra  $(\Gis(\Lam,\omega); \db[,\db])$
is given by pairs satisfying
\begin{enumerate}
\item  $E{\tecka}h  + \Lambda(L_E\omega,\alpha)  = 0\,,$
\item  $ d\omega(\alpha\Sha,\beta\Sha) + h \, d\omega(E,\beta\Sha) + dh(\beta\Sha)  
 = 
0\, $ for any 1-form $\beta$,
\item $ \alpha(E)\, d\omega(\beta\Sha,\gamma\Sha) - d\alpha(\beta\Sha,\gamma\Sha)
 =
0\, $ for any 1-forms $\beta, \gamma$.
\end{enumerate}
}
\end{rmk}

\begin{lem}\label{Lm: 2.15}
Let $X$ be a vector field on $\f M$. 
Then
\begin{equation}\label{Eq: 2.18}
L_X\beta\Sha = (L_X\beta)\Sha
\end{equation}
for any 1-form $\beta$
if and only if $X$ is an infinitesimal symmetry of $\Lambda$.
\end{lem}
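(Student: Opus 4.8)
The plan is to trace how the Lie derivative $L_X$ passes through the musical morphism $\Sha$ associated with $\Lam$, and to isolate the obstruction to commuting the two operations. First I would write $\beta\Sha = \Lam\Sha(\beta)$ as a natural contraction $C(\beta\ten\Lam)$ of the tensor product of the $1$-form $\beta$ with the $2$-vector field $\Lam$, where $C$ pairs the covector slot of $\beta$ against one slot of $\Lam$. The whole argument then rests on two standard facts about the Lie derivative: it is a derivation on the tensor algebra, and it commutes with every natural contraction.

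The key step is the resulting Leibniz-type identity. Applying $L_X$ to $C(\beta\ten\Lam)$ and using $L_X(\beta\ten\Lam) = (L_X\beta)\ten\Lam + \beta\ten(L_X\Lam)$ together with $L_X\com C = C\com L_X$ gives
\[
L_X(\beta\Sha) = (L_X\beta)\Sha + (L_X\Lam)\Sha(\beta),
\]
where $(L_X\Lam)\Sha$ is the musical morphism of the $2$-vector field $L_X\Lam = [X,\Lam]$ (recall from the preliminaries that $[X,P] = L_X P$, and note $[X,\Lam]\in\M V^2(\f M)$, so the correction term is again a genuine vector field). Thus the failure of \eqref{Eq: 2.18} is measured exactly by the vector field $(L_X\Lam)\Sha(\beta)$.

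Finally I would conclude by a nondegeneracy observation. Equation \eqref{Eq: 2.18} holds for every $1$-form $\beta$ if and only if $(L_X\Lam)\Sha(\beta) = 0$ for every $\beta$; but a $2$-vector field annihilated by all such contractions is itself zero, since $(L_X\Lam)(\beta,\gamma) = \gamma\big((L_X\Lam)\Sha(\beta)\big)$ for all $1$-forms $\beta,\gamma$. Hence \eqref{Eq: 2.18} for all $\beta$ is equivalent to $L_X\Lam = [X,\Lam] = 0$, i.e. to $X$ being an infinitesimal symmetry of $\Lam$. The only real content is the displayed identity above; I expect the main care to be in fixing the convention $\beta\Sha = \Lam\Sha(\beta)$ and the contracted slot, so that the antisymmetry of $\Lam$ does not introduce a spurious sign, after which the derivation property of $L_X$ makes everything formal.
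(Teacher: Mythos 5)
Your proof is correct, and it takes a genuinely different route from the paper's. You treat $\beta\Sha=\Lam\Sha(\beta)$ as a natural contraction of $\beta\ten\Lam$ and invoke the two standard properties of the Lie derivative (derivation over tensor products, commutation with contractions) to obtain the exact identity
\begin{equation*}
L_X(\beta\Sha) = (L_X\beta)\Sha + (L_X\Lam)\Sha(\beta)\,,
\end{equation*}
after which the equivalence follows from pointwise nondegeneracy of the pairing between covectors and vectors: $(L_X\Lam)\Sha(\beta)=0$ for all $\beta$ forces $(L_X\Lam)(\beta,\gamma)=0$ for all $\beta,\gamma$, i.e. $L_X\Lam=[X,\Lam]=0$. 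The paper instead writes $X=X_{(\alpha,h)}=\alpha\Sha+h\,E$ via the splitting \eqref{Eq: 1.4}, computes $L_X\beta\Sha$ from the bracket formulas \eqref{Eq: 2.2}--\eqref{Eq: 2.3} and $(L_X\beta)\Sha$ separately, and identifies the vanishing of their difference with the component conditions \eqref{Eq: 2.14}--\eqref{Eq: 2.15}, concluding via Lemma \ref{Lm: 2.9}. What your argument buys: it is shorter, intrinsic (no splitting, no structure equations \eqref{Eq: 1.3}, no appeal to Lemma \ref{Lm: 2.9}), and it proves the statement for an arbitrary $2$-vector field on any manifold, since the almost-coPoisson-Jacobi structure plays no role. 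What the paper's computation buys: it exhibits the obstruction explicitly in terms of the generator $(\alpha,h)$ --- precisely the conditions \eqref{Eq: 2.14}--\eqref{Eq: 2.15} that are reused elsewhere (Lemma \ref{Lm: 2.14}, Theorem \ref{Th: 2.16}) --- so it keeps the paper's bookkeeping of generators self-contained. Your flagged point of care, the sign and slot convention in $\Lam\Sha$, is indeed harmless: whatever contraction defines $\Sha$ for $\Lam$ defines it for $L_X\Lam$ as well, so the Leibniz identity holds verbatim and no sign can spoil the equivalence.
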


\begin{proof}
Let  $X = X_{(\alpha,h)}$. Then
\begin{align*}
L_{X_{(\alpha,h)}} \beta\Sha
& =
[\alpha\Sha + h\, E, \beta\Sha]
= \big(
d\Lambda(\alpha,\beta) - i_{\beta\Sha}d\alpha + \alpha(E) i_{\beta\Sha} d\ome
\\
& \quad 
+ i_{\alpha\Sha}d\beta - \beta(E) i_{\alpha\Sha} d\ome
+
h\, L_E \beta - h \, \beta(E)\, L_E \omega
\big)\Sha
\\
& \quad 
- \big(
d\omega(\alpha\Sha,\beta\Sha) + h\, i_{\beta\Sha} L_E\omega
+ i_{\beta\Sha} dh
\big)\, E\,.
\end{align*}
On the other hand we have
\begin{align*}
(L_{X_{(\alpha,h)}} \beta)\Sha
& = \big(
d\Lambda(\alpha,\beta) 
+ i_{\alpha\Sha}d\beta + h\, L_E \beta + \beta(E)\, dh
\big)\Sha\,.
\end{align*}
Then
\begin{gather*}
(L_{X_{(\alpha,h)}}
 \beta)\Sha  - L_{X_{(\alpha,h)}} \beta\Sha
= \big( i_{\beta\Sha}d\alpha - \alpha(E) i_{\beta\Sha} d\ome
\\
+
\beta(E)\, ( dh + h \, L_E \omega 
+ i_{\alpha\Sha} d\ome
)
\big)\Sha
+
\big(
d\omega(\alpha\Sha,\beta\Sha) + h\, i_{\beta\Sha} L_E\omega
+ i_{\beta\Sha} dh
\big)\, E\,.
\end{gather*}
The identity \eqref{Eq: 2.18} is satisfied
if and only if
 \begin{align*}
d\alpha(\beta\Sha, \gamma\Sha) - \alpha(E)\, d\ome(\beta\Sha, \gamma\Sha)
& = 
0\,,
\\
d\omega(\alpha\Sha,\beta\Sha) + h\, d\omega(E,\beta\Sha)
+  dh(\beta\Sha)
& =
0\,
 \end{align*}
 for any 1-form $\gamma$, i.e., by Lemma \ref{Lm: 2.9}, if and only if  $X_{(\alpha,h)}$ is an infinitesimal symmetry of $\Lambda$.
\end{proof}

\begin{thm}\label{Th: 2.16} 
Let $X$ be a vector field on $\f M$. The following conditions
are equivalent:

1. $L_X\omega = 0$ and $L_X\Ome = 0$.

2. $L_X E= [X,E]=0$ and
$L_X\Lambda= [X,\Lambda]=0$.

Hence the Lie algebras 
$(\Gis(\omega,\Omega); \db[,\db])$ and $(\Gis(E,\Lambda); \db[,\db])$ coincides. 
\end{thm}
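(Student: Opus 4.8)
My plan is to read off the equivalence directly from the explicit generator conditions already established: Lemma~\ref{Lm: 2.11} describes the symmetries of $(\omega,\Omega)$ and Lemma~\ref{Lm: 2.14} those of $(E,\Lambda)$. Writing $X=X_{(\alpha,h)}$, I note first that conditions (2) and (3) of the two lemmas are verbatim identical. Hence the whole statement collapses to comparing condition (1) of Lemma~\ref{Lm: 2.11}, namely $d\alpha=0$ together with $\alpha(E)=0$, against conditions (1) and (4) of Lemma~\ref{Lm: 2.14}; everything else is shared.

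The key preliminary move is to fix the gauge freedom in the representative. Since $\omega\Sha=\Lam\Sha(\omega)=0$ (by $i_\omega\Lam=0$), the pairs $(\alpha,h)$ and $(\alpha+f\,\omega,h)$ determine the same vector field for every $f\in C^\infty(\f M)$. I would check that all four conditions of Lemma~\ref{Lm: 2.14} are unchanged under $\alpha\mapsto\alpha+f\,\omega$, using $\omega(\beta\Sha)=0$, $\Lam(L_E\omega,\omega)=0$ and $(df\wed\omega)(\beta\Sha,\gamma\Sha)=0$. Membership in $\Gis(E,\Lambda)$ can therefore be tested on the distinguished representative with $\alpha(E)=0$, obtained by replacing $\alpha$ with $\alpha-\alpha(E)\,\omega$; this is exactly the normalization already used in Theorem~\ref{Th: 2.5} and built into $\Ker_{cl}(E)$.

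With $\alpha(E)=0$ fixed, Cartan's formula gives $L_E\alpha=i_E\,d\alpha+d\big(\alpha(E)\big)=i_E\,d\alpha$, so $(L_E\alpha)(\beta\Sha)=d\alpha(E,\beta\Sha)$. Thus condition (1) of Lemma~\ref{Lm: 2.14} reduces to $d\alpha(E,\beta\Sha)=0$ and condition (4) to $d\alpha(\beta\Sha,\gamma\Sha)=0$, for all $1$-forms $\beta,\gamma$. Because $T\f M=\im(\Lam\Sha)\oplus\langle E\rangle$, the vectors $E$ and the $\beta\Sha$ span $T\f M$, so these two families are jointly equivalent to $d\alpha=0$ — precisely condition (1) of Lemma~\ref{Lm: 2.11}. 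This proves $1\Leftrightarrow 2$; since the surviving generator pairs and the restricted bracket $\db[,\db]$ are then literally the same, the Lie algebras $(\Gis(\omega,\Omega);\db[,\db])$ and $(\Gis(E,\Lambda);\db[,\db])$ coincide.

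The step I expect to be delicate is the gauge normalization: conditions (1) and (4) of Lemma~\ref{Lm: 2.14} only probe $d\alpha$ in the $E$-direction and along $\im(\Lam\Sha)$ separately, and they explicitly carry the factor $\alpha(E)$, so without first reducing to $\alpha(E)=0$ their equivalence with the clean statement $d\alpha=0$ is not transparent. As an alternative I would bypass the lemmas and argue intrinsically from the duality relations \eqref{Eq: 1.1}: applying the derivation $L_X$ to $i_E\omega=1$, $i_E\Omega=0$, $i_\omega\Lam=0$ and $\Omega\Fla\com\Lam\Sha=q_1$, and invoking $\Ker(\Omega\Fla)=\langle E\rangle$, $\Ker(\Lam\Sha)=\langle\omega\rangle$ together with the two splittings, one propagates $L_X\omega=L_X\Omega=0$ to $L_XE=L_X\Lam=0$ and conversely. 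In that route Lemma~\ref{Lm: 2.15} supplies exactly the bookkeeping identity $L_X(\Lam\Sha\beta)=\Lam\Sha(L_X\beta)$ required to transport the Lie derivative through the musical morphisms.
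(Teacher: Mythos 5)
Your proof is correct, and it handles the crucial direction $2\Rightarrow 1$ by a genuinely different route than the paper. Both arguments share the same frame — comparing Lemma \ref{Lm: 2.11} with Lemma \ref{Lm: 2.14}, whose conditions (2) and (3) match verbatim — and the direction $1\Rightarrow 2$ is the same easy check in both (with $d\alpha=0$, $\alpha(E)=0$ one has $L_E\alpha=i_Ed\alpha=0$, so (1) and (4) of Lemma \ref{Lm: 2.14} hold term by term). For the converse, however, the paper does not compare the remaining conditions at all: it computes $L_{X_{(\alpha,h)}}\Omega$ directly, evaluating it on the spanning pairs $(E,\beta\Sha)$ and $(\beta\Sha,\gamma\Sha)$ by means of the bracket formulas \eqref{Eq: 2.2}, \eqref{Eq: 2.3}, the structure equations \eqref{Eq: 1.3} and $\Omega(\beta\Sha,\gamma\Sha)=-\Lambda(\beta,\gamma)$, and finds that these evaluations coincide (up to sign) with the left-hand sides of conditions (1) and (4) of Lemma \ref{Lm: 2.14}, hence vanish. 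You instead prove that all four conditions of Lemma \ref{Lm: 2.14} are invariant under the gauge change $\alpha\mapsto\alpha+f\,\omega$ — your checks, using $\omega(\beta\Sha)=0$, $\Lambda(L_E\omega,\omega)=0$ and $(df\wedge\omega)(\beta\Sha,\gamma\Sha)=0$, are all valid — then normalize to $\alpha(E)=0$ and use Cartan's formula to identify (1) and (4) of Lemma \ref{Lm: 2.14} with $d\alpha(E,\beta\Sha)=0$ and $d\alpha(\beta\Sha,\gamma\Sha)=0$, hence with $d\alpha=0$ via the splitting $T\f M=\im(\Lambda\Sha)\oplus\langle E\rangle$. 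This buys you two things: you avoid the Schouten-bracket computation entirely, reusing Theorem \ref{Th: 2.5} and Lemma \ref{Lm: 2.11} instead of re-deriving $L_X\Omega=0$; and you make explicit a point the paper glosses over, namely that the representative $(\alpha,h)$ produced by Lemma \ref{Lm: 2.14} need not satisfy $\alpha(E)=0$, so a normalization step is genuinely required before the two lemmas can be matched — the paper sidesteps this by computing with an arbitrary representative. What the paper's route buys in exchange is a self-contained tensor-level verification and the geometrically informative identities $(L_X\Omega)(E,\beta\Sha)=(L_E\alpha)(\beta\Sha)-\alpha(E)\,(L_E\omega)(\beta\Sha)$ and $(L_X\Omega)(\beta\Sha,\gamma\Sha)=d\alpha(\beta\Sha,\gamma\Sha)-\alpha(E)\,d\omega(\beta\Sha,\gamma\Sha)$, which explain where conditions (1) and (4) of Lemma \ref{Lm: 2.14} come from; your closing sketch of an intrinsic argument via the duality relations \eqref{Eq: 1.1} is plausible but remains a sketch, and is not needed since your main argument is complete.
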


\begin{proof} 1. $ \Rightarrow $ 2.
If the conditions (1), (2) and (3) in Lemma \ref{Lm: 2.11}  are satisfied then the conditions (1), $\dots$ ,(4) in Lemma \ref{Lm: 2.14} are satisfied.

2. $ \Rightarrow $ 1.
From Lemmas \ref{Lm: 2.3} and  \ref{Lm: 2.14}  it follows that infinitesimal symmetries of $(E,\Lambda)$ are infinitesimal symmetries of $\omega$. 
Now let $ X_{(\alpha,h)} $ be an infinitesimal symmetry of $ (E,\Lambda) $. To prove that $ X_{(\alpha,h)} $ is the infinitesimal symmetry of $ \Omega $ it is sufficient to evaluate $ L_{ X_{(\alpha,h)}} \Omega = d(i_{ X_{(\alpha,h)}} \Omega)$ on pairs of vector fields $ E,\beta\Sha $ and $ \beta\Sha,\gamma\Sha $, where $ \beta,\, \gamma $ are any 1-forms. From \eqref{Eq: 1.3}, \eqref{Eq: 2.2}, \eqref{Eq: 2.3} and $ \Omega(\beta\Sha,\gamma\Sha) = -\Lambda(\beta,\gamma) $ (see \cite{JanMod09}) we get
\begin{align*}
(L_{ X_{(\alpha,h)}} \Omega) & (E,\beta\Sha)
 =
E\tecka (\Omega(\alpha\Sha,\beta\Sha))
- \beta\Sha\tecka (\Omega(\alpha\Sha,E))
- \Omega(\alpha\Sha,[E,\beta\Sha])
\\
& =
- E\tecka (\Lambda(\alpha,\beta))
+ \Lambda(\alpha,L_E\beta)
- \beta(E)\,\Lambda(\alpha,L_E\omega)
\\
& =
- (L_E\Lambda)(\alpha,\beta)
- \Lambda(L_E\alpha,\beta)
- \beta(E)\,\Lambda(\alpha,L_E\omega)
\\
& =
i_{E\wedge(L_E\ome)\Sha} (\alpha\wedge\beta)
- \Lambda(L_E\alpha,\beta)
- \beta(E)\,\Lambda(\alpha,L_E\omega)
\\
& =
- \alpha(E)\, (L_E\omega)(\beta\Sha) + (L_E\alpha)(\beta\Sha)
\end{align*}
which vanishes by (1) of Lemma \ref{Lm: 2.14}.
Similarly
\begin{align*}
(L_{ X_{(\alpha,h)}} & \Omega)  (\beta\Sha,\gamma\Sha)
 =
\beta\Sha\tecka (\Omega(\alpha\Sha,\gamma\Sha))
- \gamma\Sha\tecka (\Omega(\alpha\Sha,\beta\Sha))
- \Omega(\alpha\Sha,[\beta\Sha,\gamma\Sha])
\\
& =
- \beta\Sha\tecka (\Lambda(\alpha,\gamma))
+ \gamma\Sha\tecka (\Lambda(\alpha,\beta))
+ \Lambda(\alpha,d(\Lambda(\beta,\gamma)))
- \Lambda(\alpha,i_{\gamma\Sha}d\beta)
\\
& \quad
+ \beta(E)\, \Lambda(\alpha,i_{\gamma\Sha}d\omega))
+ \Lambda(\alpha,i_{\beta\Sha}d\gamma)
- \gam(E)\, \Lambda(\alpha,i_{\beta\Sha}d\omega))
\\
& =
- \frac 12 i_{[\Lambda,\Lambda]}(\alpha\wedge\beta\wedge\gamma)
\\
& \quad
+ d\alpha(\beta\Sha,\gamma\Sha)
- \gamma(E)\, d\omega(\beta\Sha, \alpha\Sha)
+ \beta(E)\, d\omega(\gamma\Sha, \alpha\Sha)
\\
& =
- i_{E\wedge(\Lambda\Sha\ten\Lambda\Sha)d\omega}(\alpha\wedge\beta\wedge\gamma)
\\
& \quad
+ d\alpha(\beta\Sha,\gamma\Sha)
- \gamma(E)\, d\omega(\beta\Sha, \alpha\Sha)
+ \beta(E)\, d\omega(\gamma\Sha, \alpha\Sha)
\\
& =
 d\alpha(\beta\Sha,\gamma\Sha)
- \alpha(E)\, d\omega(\beta\Sha, \gamma\Sha)
\end{align*} 
which vanishes by (4) of Lemma \ref{Lm: 2.14}. So $ L_{ X_{(\alpha,h)}} \Omega = 0$.
\end{proof}

\subsection{Derivations on the algebra $(\Gis(\omega,\Omega); \db[,\db])$}
\label{Sec: Derivations}

Let us assume the Lie algebra $(\Gis(\Omega); \db[,\db])$ of generators of infinitesimal symmetries of $\Omega$. The bracket $\db[,\db]$ is a 1st order bilinear differential operator
$$
 \Gis(\Omega)  \times  \Gis(\Omega)
\to \Gis(\Omega)\,.
$$

\begin{thm}\label{Th: 2.17}
The 1st order differential operator
$$
D_{(\alpha,h)} : \Gis(\Omega)
\to \Gis(\Omega)
$$
given by
$$
D_{(\alpha_1,h_1)}(\alpha_2,h_2) = \db[(\alpha_1,h_1);(\alpha_2,h_2)\db]
$$
is a derivation on the Lie algebra $( \Gis(\Omega) ,\db[,\db])$.
\end{thm}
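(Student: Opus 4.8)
The plan is to recognize that this statement is nothing but the Jacobi identity for the bracket $\db[,\db]$ rewritten in the language of adjoint operators. Recall that a linear operator $D$ on a Lie algebra $(\mathfrak{g};\db[,\db])$ is a \emph{derivation} precisely when it obeys the Leibniz rule $D\db[s_2;s_3\db] = \db[Ds_2;s_3\db] + \db[s_2;Ds_3\db]$ for all $s_2,s_3$. Writing $s_i=(\alpha_i,h_i)$ and substituting the definition $D_{s_1}(\cdot)=\db[s_1;\cdot\db]$, this Leibniz rule reads
\[
\db[s_1;\db[s_2;s_3\db]\db] = \db[\db[s_1;s_2\db];s_3\db] + \db[s_2;\db[s_1;s_3\db]\db],
\]
which is exactly the (ungraded) Jacobi identity. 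So $D_{s_1}$ is a derivation if and only if $\db[,\db]$ satisfies Jacobi.

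The first step is therefore to confirm that $(\Gis(\Omega);\db[,\db])$ really is a Lie algebra, which was already established after \eqref{Eq: 2.10a}: the subsheaf $\Ker_{cl}(E)\times C^\infty(\f M)$ is closed under the ambient bracket \eqref{Eq: 2.4} and forms a Lie subalgebra of $(\Omega^1(\f M)\times C^\infty(\f M);\db[,\db])$. The second step is then simply to invoke the Jacobi identity of this Lie algebra and read it, via the rewriting above, as the derivation property of $D_{s_1}$. That $D_{s_1}$ maps $\Gis(\Omega)$ into itself is precisely the closedness just cited, and that it is a first-order differential operator is visible directly from the explicit formula \eqref{Eq: 2.10a}, in which each term is at most first order in the second argument $(\alpha_2,h_2)$.

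If one prefers a self-contained justification of the Jacobi identity rather than quoting the Lie-algebra claim, the route is to transfer it from the Lie bracket of vector fields using the intertwining relation $X_{\db[s_i;s_j\db]}=[X_{s_i},X_{s_j}]$ recorded after \eqref{Eq: 2.4}: the ordinary Jacobi identity for $[,]$ on $\M V^1(\f M)$ then forces the corresponding identity for $\db[,\db]$. The only real obstacle here is bookkeeping about the non-uniqueness of the splitting \eqref{Eq: 1.4}, since $\alpha$ is determined only modulo $\langle\omega\rangle$; this is harmless because both components of \eqref{Eq: 2.4} depend on each $\alpha_i$ only through $\alpha_i\Sha$ and $\alpha_i(E)$, and within $\Gis(\Omega)$ the representatives are further pinned down by $\alpha_i\in\Ker_{cl}(E)$. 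Beyond these observations no computation is needed, so I expect the proof to reduce to the one-line remark that $\operatorname{ad}_{s_1}=\db[s_1;\cdot\db]$ is a derivation because $\db[,\db]$ is a Lie bracket.
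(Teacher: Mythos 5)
Your proposal is correct and matches the paper's own argument: the paper proves Theorem \ref{Th: 2.17} in one line by invoking the Jacobi identity for $\db[,\db]$, which is exactly your identification of the Leibniz rule for $D_{(\alpha_1,h_1)} = \db[(\alpha_1,h_1);\cdot\,\db]$ with the Jacobi identity, with the closedness of $\Gis(\Omega)$ under the bracket already established after \eqref{Eq: 2.10a}. Your additional remarks (transferring Jacobi from vector fields via $X_{\db[s_i;s_j\db]}=[X_{s_i},X_{s_j}]$ and the well-definedness modulo $\langle\omega\rangle$) are sound but not needed beyond what the paper itself assumes.
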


\begin{proof}
It follows from the Jacobi identity for $\db[, \db]$.
\end{proof}

We can define a differential operator $L_X: \Omega^1(\f M) \times  C^{\infty}(\f M)
\to \Omega^1(\f M)  \times  C^{\infty}(\f M) $ given by the Lie derivatives with respect to a vector field $X$, i.e.
\begin{equation}\label{Eq: 2.21}
L_X(\alpha,h) = (L_X\alpha,L_X h)\,.
\end{equation}
Generally this operator does not preserve sheaves of generators of infinitesimal symmetries.

\begin{thm}\label{Th: 2.19}
Let $X$ be an infinitesimal symmetry of the almost-cosymplectic-contact structure $(\omega,\Omega)$.
Then the operator $L_X$ is a derivation on the Lie algebra   
$( \Gis(\omega,\Ome) ;\db[,\db])$ of generators of infinitesimal symmetries of
$(\omega,\Omega)$.
\end{thm}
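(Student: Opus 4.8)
The plan is to show that, when restricted to $\Gis(\omega,\Ome)$, the operator $L_X$ coincides with the inner derivation $D_{(\alpha_0,h_0)}$ determined by the generator $(\alpha_0,h_0)$ of $X$; the derivation property then follows from Theorem \ref{Th: 2.17}, i.e. from the Jacobi identity for $\db[,\db]$. To get started I would invoke Theorem \ref{Th: 2.16}: the hypothesis $L_X\ome = 0$, $L_X\Ome = 0$ is equivalent to $L_X E = [X,E] = 0$ and $L_X\Lam = [X,\Lam] = 0$, and it is these last two identities that let $L_X$ interact with the splitting \eqref{Eq: 1.4}.

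The key technical step is the intertwining identity
\[
L_X X_{(\beta,g)} = X_{L_X(\beta,g)}
\]
valid for every pair $(\beta,g)$. Indeed $L_X X_{(\beta,g)} = [X,\beta\Sha + g\,E] = L_X(\beta\Sha) + (L_X g)\,E$ because $L_X E = 0$, while Lemma \ref{Lm: 2.15} gives $L_X(\beta\Sha) = (L_X\beta)\Sha$ since $L_X\Lam = 0$; hence $L_X X_{(\beta,g)} = (L_X\beta)\Sha + (L_X g)\,E = X_{(L_X\beta,\,L_X g)}$, which is exactly $X_{L_X(\beta,g)}$ with $L_X$ as in \eqref{Eq: 2.21}.

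By Theorem \ref{Th: 2.10} I would then write $X = X_{(\alpha_0,h_0)}$ for a (unique) generator $(\alpha_0,h_0)\in\Gis(\omega,\Ome)$, and fix an arbitrary $(\alpha,h)\in\Gis(\omega,\Ome)$. Combining the homomorphism relation $X_{\db[(\alpha_0,h_0);(\alpha,h)\db]} = [X_{(\alpha_0,h_0)},X_{(\alpha,h)}]$ with the intertwining identity yields
\[
X_{L_X(\alpha,h)} = X_{\db[(\alpha_0,h_0);(\alpha,h)\db]}\,.
\]
To cancel the generator map I must check that both pairs have $1$-form part vanishing on $E$: on the right this holds since $\db[(\alpha_0,h_0);(\alpha,h)\db]\in\Gis(\omega,\Ome)$, and on the left because $L_X\alpha$ is closed ($d$ commutes with $L_X$) and satisfies $(L_X\alpha)(E) = L_X(\alpha(E)) - \alpha(L_X E) = 0$, so $L_X\alpha\in\Ker_{cl}(E)$. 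As noted after \eqref{Eq: 1.4}, the generator map is injective on such pairs — a difference of two of them is a multiple of $\omega$ annihilated by $E$, hence zero, while the $E$-components agree after contraction with $\omega$. Cancelling gives $L_X(\alpha,h) = \db[(\alpha_0,h_0);(\alpha,h)\db] = D_{(\alpha_0,h_0)}(\alpha,h)$; in particular $L_X$ preserves $\Gis(\omega,\Ome)$ and agrees there with $D_{(\alpha_0,h_0)}$.

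Since $D_{(\alpha_0,h_0)} = \db[(\alpha_0,h_0);\,\cdot\,\db]$ is a derivation of $(\Gis(\omega,\Ome);\db[,\db])$ by the Jacobi identity for $\db[,\db]$, exactly as in Theorem \ref{Th: 2.17}, it follows that $L_X$ is a derivation, as claimed. The step I expect to require the most care is this last identification $L_X = D_{(\alpha_0,h_0)}$: everything hinges on verifying that $L_X(\alpha,h)$ again has its $1$-form part in $\Ker_{cl}(E)$ so that the injectivity of the generator map may be applied; once that is secured, the derivation property is purely formal and no direct manipulation of the explicit bracket \eqref{Eq: 2.16} is needed.
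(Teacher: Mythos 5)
Your proof is correct, and it takes a genuinely different route from the paper's. The paper argues by direct computation: it first checks one by one that $(L_X\alpha,L_Xh)$ again satisfies the defining conditions of $\Gis(\omega,\Omega)$ (namely $L_X\alpha=di_X\alpha\in\Ker_{cl}(E)$ and conditions (1)--(2) of Lemma \ref{Lm: 2.3}, using $L_XE=0$ and $L_X\Lambda=0$, hence $L_Xd\omega=0$ and $L_XL_E\omega=0$), and then verifies the Leibniz rule componentwise on the explicit bracket \eqref{Eq: 2.16}, invoking Lemma \ref{Lm: 2.15} at the end to replace $L_X\alpha_i\Sha$ by $(L_X\alpha_i)\Sha$. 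You instead show that $L_X$, restricted to $\Gis(\omega,\Omega)$, coincides with the inner derivation $D_{(\alpha_0,h_0)}$ of Theorem \ref{Th: 2.17}: the intertwining identity $L_XX_{(\beta,g)}=X_{L_X(\beta,g)}$ (built from the same two inputs the paper uses, Theorem \ref{Th: 2.16} and Lemma \ref{Lm: 2.15}), the homomorphism property stated after \eqref{Eq: 2.4}, and the injectivity of $(\alpha,h)\mapsto X_{(\alpha,h)}$ on pairs whose $1$-form part annihilates $E$ together give $L_X(\alpha,h)=\db[(\alpha_0,h_0);(\alpha,h)\db]$, after which the derivation property is exactly the Jacobi identity. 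Your cancellation step is sound: the $1$-form part of the bracket is $d\Lambda(\alpha_0,\alpha)$, which annihilates $E$ by the computation following \eqref{Eq: 2.10a}, while $(L_X\alpha)(E)=0$ as you note, so the $\langle\omega\rangle$-ambiguity of the splitting \eqref{Eq: 1.4} disappears. Your route buys a purely formal derivation property plus a sharper statement --- on $\Gis(\omega,\Omega)$ the operator $L_X$ is \emph{inner}, which strengthens the antisymmetrized formula \eqref{Eq: 2.22} of Remark \ref{Rm: 2.5}; its cost is that it leans on the closure of $\Gis(\omega,\Omega)$ under $\db[,\db]$ (asserted after \eqref{Eq: 2.16}), whereas the paper's computation re-establishes membership of $L_X(\alpha,h)$ in $\Gis(\omega,\Omega)$ directly and stays self-contained at the level of the explicit formulas.
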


\begin{proof}
First, let us recall that infinitesimal symmetries of $(\omega,\Omega)$ are infinitesimal symmetries of $(E,\Lambda)$. Suppose the bracket \eqref{Eq: 2.16} of generators of infinitesimal symmetries of $(\omega,\Omega)$. 
We have to prove that $L_X$ is an operator on $\Gis(\omega,\Omega)$, 
i.e. that for any $(\alpha,h) \in \Gis(\omega,\Omega)$ the pair 
$(L_X\alpha,L_Xh) \in \Gis(\omega,\Omega)$. 

We have $\alpha\in \Ker_{cl}(E)$, then $L_X\alpha = di_X\alpha$ which is a closed 1-form. Further
$$
L_X(\alpha(E)) = 0 \quad \Leftrightarrow \quad (L_X\alpha)(E) + \alpha(L_XE) = (L_X\alpha)(E)  = 0
$$
and $L_X\alpha\in \Ker_{cl}(E)$.

 Further we have to prove that the pair  $(L_X\alpha,L_Xh)$ satisfies conditions (1) and (2) of Lemma \ref{Lm: 2.3}. From $L_XE =0$ and $L_X\Lambda = 0$ we get $L_XL_E\omega = 0$ and $L_Xd\omega = 0$. Moreover,
$L_X dh = dL_Xh$.
 
 The pair $(\alpha,h)$ satisfies (1) of Lemma \ref{Lm: 2.3} and we get
\begin{align*}
0
& =
L_X\big(dh(E) + \Lambda(L_E\omega,\alp)\big)
\\
& =
d(L_Xh)(E) + \Lambda(L_E\omega,L_X\alpha) = 0
\end{align*}
and the condition (1) of Lemma \ref{Lm: 2.3} for $(L_X\alpha,L_Xh)$ is satisfied.

Similarly, from the condition (2) of Lemma \ref{Lm: 2.3} we have, for any 1-form $\beta$,
\begin{align*}
0
& =
L_X\big(
d\omega(\alpha\Sha,\beta\Sha) + h \, d\omega(E,\beta\Sha) + dh(\beta\Sha)
\big)
\\
& =
\big(
 d\omega(L_X\alpha\Sha,\beta\Sha)
 + (L_X h) \, d\omega(E,\beta\Sha)
 + d(L_Xh)(\beta\Sha)
\big)
\\
& \quad
+ \big(
d\omega(\alpha\Sha,L_X\beta\Sha)
+ h\, d\omega(E,L_X\beta\Sha)
+ h\, d\omega(E,L_X\beta\Sha)
\big)
\,.
\end{align*}
The term in the second bracket is vanishing because of the condition (2) expressed on $L_E\beta\Sha = (L_E\beta)\Sha$.
Hence the condition  (2) of Lemma \ref{Lm: 2.3}  for the pair $(L_X\alpha,L_Xh)$ is satisfied and this pair is in $\Gis(\omega,\Omega)$.

Further, we have to prove
\begin{align*}
L_X\db[(\alpha_{1},h_{1});(\alpha_{2},h_{2})\db]
& = 
\db[(L_X\alpha_{1},L_X h_{1});(\alpha_{2},h_{2})\db] 
\\
& \quad
+ \db[(\alpha_{1},h_{1});(L_X\alpha_{2},L_Xh_{2})\db]\,.
\end{align*}
For the the first parts of the above pairs the  identity 
$$
L_X(d(\Lambda(\alpha_1,\alpha_2))) = d(\Lambda(L_X\alpha_1,\alpha_2))
+ d (\Lambda(\alpha_1, L_X\alpha_2))\,
$$
has to be satisfied.
But 
\begin{align*}
L_X(d & (\Lambda(\alpha_1  ,\alpha_2)))
 =
di_Xdi_\Lambda(\alpha_1 \wedge \alpha_2) 
=
di_{[X,\Lam]}(\alpha_1 \wedge \alpha_2)
+ d i_\Lambda d i_X (\alpha_1 \wedge \alpha_2)
\\
& =
d(([X,\Lambda])(\alpha_1,\alpha_2)) + d(\Lambda(L_X\alpha_1,\alpha_2))
+ d (\Lambda(\alpha_1, L_X\alpha_2))\,
\end{align*}
and for $[X,\Lambda] = L_X \Lam  = 0$ the identity holds.

For the second parts of pairs the following identity has to be satisfied.
\begin{align*}
L_X\big(
&
d\omega(\alpha_{1}\Sha,\alpha_{2}\Sha)
+ h_2 \, \Lambda(L_E\omega,\alpha_{1}) 
- h_1 \, \Lambda(L_E\omega,\alpha_{2})\big)
=
\\
& =
d\omega((L_X\alpha_{1})\Sha,\alpha_{2}\Sha)
+ h_2 \, \Lambda(L_E\omega,L_X\alpha_{1}) 
- (L_Xh_1) \, \Lambda(L_E\omega,\alpha_{2})
\\
& 
+ d\omega(\alpha_{1}\Sha,(L_X\alpha_{2})\Sha)
+ (L_Xh_2) \, \Lambda(L_E\omega,\alpha_{1}) 
- h_1 \, \Lambda(L_E\omega,L_X\alpha_{2})\,.
\end{align*}
If $X$ is the infinitesimal symmetry of $(\omega,\Omega)$ then it is also the infinitesimal symmetry
of $d\omega$ and $L_E\omega$ and we get that
the above identity is equivalent to
$$
d\omega(L_X\alpha_{1}\Sha,\alpha_{2}\Sha)
+ d\omega(\alpha_{1}\Sha,L_X\alpha_{2}\Sha)
=
d\omega((L_X\alpha_{1})\Sha,\alpha_{2}\Sha)
+ d\omega(\alpha_{1}\Sha,(L_X\alpha_{2})\Sha)\,.
$$ 
By Lemma \ref{Lm: 2.15} $ L_X\alpha_{i}\Sha = (L_X\alpha_{i})\Sha $ which proves Theorem \ref{Th: 2.19}.
\end{proof}

\begin{rmk}\label{Rm: 2.5}
{\rm
We have
\begin{align}\label{Eq: 2.22}
\db[(\alpha_1 &,h_1);
(\alpha_2,h_2)\db] = \frac 12 \big( L_{X_{(\alpha_1,h_1)}}(\alpha_2,h_2)
- L_{X_{(\alpha_2,h_2)}}(\alpha_1,h_1)
\big)\,. 
\end{align}
Really
\begin{align*}
 L_{X_{(\alpha_1,h_1)}}(\alpha_2,h_2)
& - L_{X_{(\alpha_2,h_2)}}(\alpha_1,h_1) =
\\
& = 
\big(
2\, d\Lambda(\alpha_1,\alpha_2) ;
\alpha_1\Sha{\tecka}h_2 
- \alpha_2\Sha{\tecka}h_1
+ h_1\, E{\tecka}h_2
 - h_2\, E{\tecka}h_1
\big)
\end{align*}
and from (2) and (3) of Lemma \ref{Lm: 2.11} we have
\begin{align*}
\alpha_1\Sha{\tecka}h_2 
- \alpha_2\Sha{\tecka}h_1
& =
2\, d\omega(\alpha_1\Sha, \alpha_2\Sha)  + h_1 \, d\omega(E,\alpha_1\Sha) - h_2\, d\omega(E,\alpha_1\Sha)
\\
& = 
2\, d\omega(\alpha_1\Sha, \alpha_2\Sha)  + h_1 \, dE\tecka h_2 - h_2\, E\tecka h_1
\end{align*}
which implies \eqref{Eq: 2.22}.

}
\end{rmk}



\end{document}